\newtheorem{theorem}{Theorem}
\newtheorem{definition}{Definition}
\newtheorem{lemma}{Lemma}
\newtheorem{proposition}{Proposition}
\newtheorem{remark}{Remark}
\def \beq{ \begin{equation}}
\def \eeq{\end{equation}}
\def\sok{{\mathfrak{so}}}
\newcommand{\cn}{\textrm{cn}}
\newcommand{\sn}{\textrm{sn}}
\newcommand{\dn}{\textrm{dn}}
\date{}
\title{ Equivalence of  Rigid Motions and Relative Equilibria in the $N$-Body Problem on the Two-Sphere}
\begin{document}
	\maketitle
	\markboth{T.  Fujiwara, E.  P\'{e}rez-Chavela, and  S. Q.  Zhu}{Rigid Motions and Relative Equilibria}
	\author{\begin{center}
			{ Toshiaki Fujiwara$^1$, Ernesto P\'{e}rez-Chavela$^2$, Shuqiang Zhu$^3$}\\	
			\bigskip
			$^1$ College of Liberal Arts and Sciences, Kitasato University, 1-15-1 Kitasato, Sagamihara, Kanagawa 252-0329, Japan,
fujiwara@kitasato-u.ac.jp\\
			$^2$Department of Mathematics, Instituto Tecnol\'ogico Aut\'onomo\\ de México (ITAM), ernesto.perez@itam.mx\\
			$^3$School of  Mathematics,  Southwestern University of Finance and Economics, Chengdu, China, zhusq@swufe.edu.cn\\
		\end{center}


\begin{abstract}
We investigate the relationship between rigid motions and relative equilibria in the $N$-body problem on the two-dimensional sphere, $\mathbb{S}^2$. We 
prove  that any rigid motion of the $N$-body system on $\mathbb{S}^2$ must be a relative equilibrium. Our approach extends the classical study of rigid body dynamics  and utilizes a rotating frame attached to the particles to derive the corresponding equations of motion. We further show that our results can be extended to the $N$-body gravitational system in $\mathbb{R}^3$.  
The results are oriented to a broader understanding of the dynamics of $N$-body systems on curved surfaces. 
\end{abstract}

\section{Introduction}

Consider the $N$-body problem on $\mathbb{S}^2$.
Let $q_i\in \mathbb{R}^3$ be the vector 
from the centre of $\mathbb{S}^2$ to the particle $i$.
Therefore, $|q_i|=1$. Let the potential $U$ be $SO(3)$ invariant, 
where $SO(3)$ is the group of $3\times 3$ orthogonal    matrices with determinant $1$. We 
will study  rigid motions and relative equilibria of the system.

\begin{definition}\label{def1}
A  rigid motion of the $N$-body problem on $\mathbb{S}^2$ is a solution $q_1(t), \cdots, q_N(t)$  such that $q_i\cdot q_j$ are independent of time  for all pair $(i,j)$, that is, all mutual distances among the particles remain constant along the motion.
\end{definition}

\begin{definition}\label{def2}
A relative equilibrium  of the $N$-body problem  on $\mathbb{S}^2$ is a solution in the form
$q_i(t)=\exp(\xi t) q_i(0)$ for all $i$, where $\xi \in \sok(3)$. 
\end{definition}

Here, $\sok(3)$ is the Lie algebra of 
$SO(3)$,  that is, the set of all $3\times 3$ real anti-symmetric matrices. 
 Then  $\dot{q}_i = \xi q_i$,  and by the well-known correspondence 
 between $\mathfrak{so}(3)$ and $\mathbb{R}^3$, there exists a constant 
 vector $\omega$ such that $\dot{q}_i = \omega \times q_i$ (cf. Appendix \ref{appd:EulerAngles}). In other words, a relative equilibrium is a solution where each particle undergoes a uniform rotation with a constant angular velocity $\omega$.

\begin{remark} 
	If the $z$-axis is chosen to be parallel to $\omega$, then in spherical coordinates ($\theta_i,\varphi_i),$  the motion is described by 
	\begin{equation*} \dot{\theta}_i = 0, \quad \dot{\varphi}_i = |\omega| = \text{constant}, \quad \text{for all } i = 1, 2, \ldots, N. \end{equation*} 
	\end{remark}

Clearly, any relative equilibrium is a rigid motion. Conversely, a rigid motion implies that the motion is merely an isometric rotation. By Euler's rotation theorem, any isometric rotation can be realized by a single rotation around an axis, referred to as the ``instantaneous axis of rotation." Note that the ``instantaneous axis of rotation" is not necessarily fixed. In rigid body dynamics, the rotation of a free (torque-free) rigid body is governed by Euler's equations, which permit complex motions of the rotation axis (see \cite{Arnold1989, Goldstein1980, Landau1960, Whittaker1988}). A uniform rotation, however, is characterized by a fixed (in time) rotation axis, known as the ``permanent axis of rotation" (see \cite{Routh1955}).
 
The goal of this work is to prove that, for the $N$-body problem on $\mathbb{S}^2$, every rigid motion has a ``permanent axis of rotation." More precisely,
\begin{theorem}\label{thm:S2-0}
	Any rigid motion in the $N$-body  problem on $\mathbb{S}^2$
	is a relative equilibrium.
\end{theorem}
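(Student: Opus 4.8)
The plan is to realize a rigid motion as a smooth curve in $SO(3)$ and then use the equations of motion to show that the associated angular velocity cannot change in time. Since a rigid motion preserves every inner product $q_i\cdot q_j$, it preserves the Gram matrix of the configuration, so there is a smooth path $R(t)\in SO(3)$ with $R(0)=\mathrm{Id}$ and $q_i(t)=R(t)\,a_i$, where $a_i:=q_i(0)$; when the particles fail to span $\mathbb{R}^3$ the path $R$ is no longer unique, but such a smooth choice still exists after completing the moving frame. Writing $\omega(t)$ for the angular velocity of $R$, this yields $\dot q_i=\omega\times q_i$, and everything reduces to proving that $\omega$ is constant, for then $q_i(t)=\exp(\xi t)\,a_i$ with the corresponding fixed $\xi\in\sok(3)$, which is exactly a relative equilibrium in the sense of Definition \ref{def2}.

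I would next extract the conserved quantities. Because $U$ is $SO(3)$-invariant the total torque vanishes, so the total angular momentum $\mathbf L=\sum_i m_i\,q_i\times\dot q_i=\mathbb I(t)\,\omega$ is conserved; energy is conserved as well, and $U$ is constant along a rigid motion, so the kinetic energy $T=\tfrac12\,\omega\cdot\mathbb I\,\omega$ is constant too. Passing to the frame rotating with the bodies, with constant inertia operator $\mathbb I_b=\sum_i m_i(\mathrm{Id}-a_ia_i^{\top})$ and body angular velocity $\omega_b=R^{\top}\omega$, these translate into Euler's equations $\mathbb I_b\dot\omega_b=(\mathbb I_b\omega_b)\times\omega_b$ together with the two first integrals $|\mathbb I_b\omega_b|^{2}$ and $\omega_b\cdot\mathbb I_b\omega_b$. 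This is precisely the torque-free rigid body, whose generic solutions are \emph{not} uniform rotations; hence the conservation laws alone do not close the argument, and additional structure from the potential is required.

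That extra structure comes from the individual equations of motion, which are strictly stronger than their mass-weighted (torque) sum. Starting from $m_i\ddot q_i=-\nabla_{q_i}U+\lambda_i q_i$ and using that $\nabla_{q_i}U=\sum_j c_{ij}\,q_j$ with the coefficients $c_{ij}=\partial U/\partial(q_i\cdot q_j)$ \emph{constant} along a rigid motion, I would take the component tangent to $\mathbb{S}^2$ (to eliminate the multiplier $\lambda_i$) and carry it into the body frame, where the right-hand side becomes a fixed vector. The outcome is, for every particle $i$, a relation of the form
\[
\dot\omega_b-(a_i\cdot\dot\omega_b)\,a_i+(a_i\cdot\omega_b)\,(a_i\times\omega_b)=c_i,
\]
with each $c_i$ a constant vector orthogonal to $a_i$. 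Substituting $\dot\omega_b$ from Euler's equations, the left-hand side becomes a fixed function $\Psi_i(\omega_b)$, so each $\Psi_i$ is an \emph{additional} first integral of the Euler flow beyond energy and $|\mathbb I_b\omega_b|^2$.

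The hard part, and the genuine crux of the proof, is to deduce $\dot\omega_b\equiv 0$ from these relations. My plan is to use the explicit classification of torque-free motions: on a non-degenerate level set of $\bigl(|\mathbb I_b\omega_b|^{2},\ \omega_b\cdot\mathbb I_b\omega_b\bigr)$ the body angular velocity $\omega_b(t)$ traverses a one-dimensional polhode, and the tumbling solution can be written explicitly through the Jacobi elliptic functions $\sn,\cn,\dn$. Inserting this explicit time dependence into the per-particle relations above should force the elliptic modulus to degenerate, collapsing the polhode to a single point, which is exactly $\dot\omega_b=0$; equivalently, for a configuration spanning $\mathbb R^3$ the requirement that all $\Psi_i$ be constant is incompatible with a genuinely one-dimensional orbit. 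The degenerate configurations (particles confined to a great circle, or to a single diameter) reduce the rank of $\mathbb I_b$ and must be handled separately, either directly or by the same elliptic-function analysis applied to the reduced operator. Once $\dot\omega_b=0$ is secured, $\omega_b$ is a principal axis of $\mathbb I_b$ and $\omega=R\,\omega_b$ is constant in space, so the motion is a uniform rotation, i.e.\ a relative equilibrium, which completes the proof of Theorem \ref{thm:S2-0}.
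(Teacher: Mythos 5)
Your outline reproduces the paper's strategy almost step for step: attach the body frame to the particles, reduce to showing that the angular velocity is constant (this is Proposition \ref{Prop:S2-1}), derive Euler's equations \eqref{equ:Euler-1} with the two integrals \eqref{integrals}, observe that the per-particle equations have constant right-hand sides in the body frame (your displayed relation is exactly \eqref{equ:equ_S2-4}), and then play the explicit torque-free solutions against these extra constancy conditions. The genuine gap is that the crux is left as a hope: ``inserting this explicit time dependence \dots should force the elliptic modulus to degenerate, collapsing the polhode to a single point.'' That mechanism is wrong and could not be carried out: the modulus $k$ is determined by the initial data through \eqref{defModulus}, and nothing in the equations pushes it to $0$ or $1$; moreover $k^2=1$ does not collapse the polhode to a point --- it is the separatrix, along which $\Omega$ still tumbles (hyperbolic functions \eqref{OmegaForKeq1}), and it needs its own argument (Proposition \ref{Prop:asymmetric-3}). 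What must actually be proved is different in character: for any fixed, genuinely non-constant Euler solution, one expands the per-particle equation \eqref{equ:equ_S2-5} in the functions $\Omega_x\Omega_y,\Omega_y\Omega_z,\Omega_z\Omega_x,\Omega_y^2,1$, proves these are linearly independent in $t$ (Lemmas \ref{Lem:asymmetric-1} and \ref{Lem:asymmetric-2}, via series expansion of $\sn,\cn,\dn$), and concludes that the coefficient vectors \eqref{coef} vanish, which forces $x_i=y_i=0$ for every $i$. The contradiction is thus degeneration of the \emph{configuration} (all bodies on the instantaneous axis, hence $I_z=0$), not of the modulus. On top of this, the symmetric case $I_x=I_y\neq I_z$ (trigonometric polhodes, Proposition \ref{Prop:symmetric-2}) and the boundary cases $|C|^2\in\{2KI_x,2KI_y,2KI_z\}$ (Proposition \ref{Prp:asymmetric1}) require separate treatment; none of this is in your sketch.

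A second gap is the rank-deficient case. When all particles lie on a single diameter ($I_z=0$, $I_x=I_y$) --- note that particles on a great circle do \emph{not} reduce the rank of $I$ --- your reduction ``everything reduces to proving $\omega$ is constant'' fails: the frame attached to the particles is not unique (it may spin freely about the particle axis), and for an arbitrary smooth completion $\Omega$ need not be constant even though the motion is a relative equilibrium. The paper handles this case by a different argument (Proposition \ref{Prop:symmetric-1}): Euler angles together with $I_z\Omega_z=|c|\cos\theta$ give $\theta\equiv\pi/2$ and $\dot\varphi=|c|/I_x$, so the inertial motion is a uniform rotation directly, with no claim about $\Omega$. (A minor further inaccuracy: for a general $SO(3)$-invariant $U$ the gradient need not be of the form $\sum_j c_{ij}q_j$, since $U$ may depend on triple products $\det(q_i,q_j,q_k)$; the constancy of the body-frame right-hand side follows simply from the constancy of the $Q_i$, which is all the paper uses.) So your proposal has the correct skeleton --- the same as the paper's --- but the two essential pieces, the linear-independence incompatibility argument and the degenerate-configuration case, are respectively unexecuted-and-misconceived and missing.
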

 
 In \cite{Borisov2004},  the authors proved that, under the cotangent potential,  any two-body rigid motion on $\mathbb{S}^2$ must be a relative equilibrium.
Some years later in 
\cite{Borisov2018}, the authors extend the above result for arbitrary attractive potentials. 
However, despite the extensive research on relative equilibria of the $N$-body problem on $\mathbb{S}^2$ (cf. \cite{Borisov2004, Diacu2012-3, Diacu2014-2, Zhu2018-2, Perez2023} and references therein), the relationship between relative equilibria and rigid motions has rarely been considered. On the other hand, for the Newtonian $N$-body problem in $\mathbb{R}^3$, this problem has been addressed by  Lagrange \cite{Lagrange1772} for \(N=3\) and by  Pizzetti \cite{Pizzetti} for arbitrary \(N\). 
	Albouy and Chenciner \cite{Albouy1998} studied this problem for the $N$-body problem in $\mathbb{R}^n$.

The focusing in \cite{Borisov2004} and \cite{Borisov2018}  
 does not extend to cases where $N > 2$.    The methods for $\mathbb{R}^n$ 
 	developed by Pizzetti  \cite{Pizzetti}   and Albouy and Chenciner 
 \cite{Albouy1998} do not apply directly
  to problems on $\mathbb{S}^2$ 
 due to differences in the equations of motion, even though  
 Albouy-Chenciner method of reducing the equations of motion 
 is a far-reaching generalization of Lagrange's reduction method 
 for the three-body problem.

In this paper, we address these 
 challenges directly,  using geometric techniques, we solve the problem on the sphere.
Our approach 
is inspired by Euler's work on rigid body dynamics. Given a rigid motion, 
we express the equations of motion in a rotating frame attached to the 
particles. We then derive the corresponding equations governing the 
rotation of the moving frame, which are analogous to Euler's equations. 
As in the case of classical Euler's equations, these equations are 
integrable. We then analyze the types of solutions that can result 
into a rigid motion.  Our analysis  extend easily to that of the 
$N$-body gravitational system in $\mathbb{R}^3 $,  since the two problems share the same symmetry group,  $SO(3)$.

After the introduction, the paper is organized as follows: In Section \ref{sec:setting} we introduce the equations of motion in both the fixed and the rotating frame. In Section \ref{sec:Euler}, we discuss   Euler's equations 
for the rigid motion of the $N$-body problem on $\mathbb{S}^2 $.  In Sections \ref{sec:symmetric} and \ref{sec:asymmetric}, we  analyze which types 
of solutions of Euler's equations  can admit a rigid motion.  
In Section \ref{sec:proof}, we prove Theorem \ref{thm:S2-0}, our main result.   In Section \ref{sec:R3}, we extend 
Theorem \ref{thm:S2-0}
to the  $N$-body problem in $\mathbb{R}^3$  and discuss  Pizzetti's related work.  The Appendix provides some necessary background on Euler angles and solutions of Euler's equations.

\section{Basic settings} \label{sec:setting}
In this section, we derive the equations of motion of the  $N$-body problem on $\mathbb{S}^2$. For rigid motions, we write the equations in a rotating frame connected with the particles.

Consider the $N$-body gravitational system on $\mathbb{S}^2$. Denote by $m_1, \cdots, m_N$ the  positive  masses, and $q_1, \cdots, q_N$ the corresponding positions, where  $q_i \in \mathbb{R}^3 $ for $i=1, \cdots, N$. The system  
is described by the Lagrangian
\beq\label{equ:L_S2-0}
L=\sum_i \frac{m_i}{2}|\dot{q}_i|^2
+U
+\sum_i \lambda_i (|q_i|^2-r_i^2),
\eeq
where $U$ is the potential function that depends on the position
$q_1,q_2,\dots, q_N$ and $SO(3)$ invariant.
That is, 
$$
U=U(q_1,q_2,\dots,q_N)=U(Rq_1,Rq_2,\dots,Rq_N),
$$
for any $R\in SO(3)$.
The $\lambda_i$ in the Lagrangian are the Lagrange multipliers to constrain the bodies on the surface $\left|q_i\right|=r_i$.
Set $r_i=1$ for all $i$ for system on $\mathbb{S}^2$.
But in the following we will take $r_i$ as constant in time,
which can depend on $i$.

\begin{remark}
 For the curved \(N\)-body problem on \(\mathbb{S}^2 \),   $r_i=1$ for all $i$ and  the potential is \(U =\sum_{i\ne j} m_i m_j \frac{q_i \cdot q_j}{\sqrt{1 - (q_i \cdot q_j)^2}}\).
 The potential is known as the cotangent potential, since  \(\frac{q_i \cdot q_j}{\sqrt{1 - (q_i \cdot q_j)^2}}= \cot d_{ij}\), where \( d_{ij}\) is the spherical distance between \(q_i\) and  \(q_j\). 
Thus, the curved \(N\)-body problem on \(\mathbb{S}^2 \) is included in our more general setting.
\end{remark}

The $SO(3)$ invariance yields
\begin{equation}\label{SO3DiffForm}
\sum_i q_i \times \frac{\partial U}{\partial q_i}=0.
\end{equation}
By taking $Rq_i=q_i+\delta\phi \times q_i$,
the invariance takes the form
\[
0
=\sum_i(\delta\phi \times q_i)\cdot\frac{\partial U}{\partial q_i}
=\delta\phi\cdot\left(\sum_i q_i \times \frac{\partial U}{\partial q_i}\right).
\]
Since we can take $\delta\phi$ aribitray, we obtain \eqref{SO3DiffForm}.

For the rigid motion, there is $R\in SO(3)$ with $q(t)=R(t)q(0)$
and an angular velocity $\omega$ with $\dot{q}_i=\omega\times q_i$.
(We will show this in \eqref{defSmallOmega}.)

The  equations of motion are given by:
\beq
\label{equ:equ_S2-0}
m_i \ddot{q}_i=2 \lambda_i q_i+\frac{\partial U}{\partial q_i},
\quad |q_i|^2=r_i^2.
\eeq

Apart from the energy integral, there are two additional  first integrals for the rigid motion.  
Inner product of $\dot{q_i}=\omega\times q_i$ and \eqref{equ:equ_S2-0},
yields the kinetic energy, given by $K=\sum_i (m_i/2) |\dot{q}_i|^2$,  is constant.
Here, we have used \eqref{SO3DiffForm}.
Another conserved quantity is the angular momentum.
Using the identity  $q_i \times q_i=0$ and \eqref{SO3DiffForm},
$c=\sum_i m_i q_i \times \dot{q}_i$ is constant.

\subsection{Rotating frame}\label{GRotatingframe}
To describe rigid motions, it is useful to introduce a rotating coordinate system.  Let $q$ be  the Cartesian radius vector of a point relative to a ``fixed'' (i.e., inertial) system, and $Q$ be  the Cartesian radius vector of the same point relative to a rotating coordinate system. The origins of both  coordinates  system are chosen to coincide with the center of $\mathbb{S}^2 $. The vectors $q, Q$ are related by the following equation
\beq\label{equ:qRQ-0}
q= R(t) Q, 
\eeq
 where $R(t)$ is a twice differentiable curve in $SO(3)$, i.e.,  $R^T=R^{-1}$.
 
 The time derivation yields
 \[ \dot{q}=R\dot{Q}+\dot{R}Q= R(\dot{Q}+R ^{-1} \dot{R}  Q).      \]
 Note that the matrix $A=R ^{-1} \dot{R}$ is anti-symmetric, i.e., $A+ A^T=0$. Let 
 \begin{equation}\label{equ:hat}
 A=\left(\begin{array}{rrr}
 	0\phantom{x} & -\Omega_z & \Omega_y \\
 	\Omega_z & 0\phantom{x} & -\Omega_x \\
 	-\Omega_y & \Omega_x & 0\phantom{x}
 \end{array}\right), \qquad \Omega = (\Omega_x, \Omega_y, \Omega_z)^T. 
 \end{equation}
Then $A\xi=\Omega \times \xi$ for any $\xi \in \mathbb{R}^3 $ (cf. Appendix \ref{appd:EulerAngles}).  Hence, the derivative of $q$ and $Q$ are related by  the identity
\beq\label{equ:derivative_qQ}
\dot{q}= R(\dot{Q}+\Omega \times Q).  
\eeq

For the rigid motion, it holds that $Q_i\cdot Q_j=a_{ij}$. It is convenient to assume that  the rotating coordinate  system is connected to the particles.  Therefore,
$\dot{Q}_i=\ddot{Q}_i=0$, and 
\[\dot q_i=R(\Omega\times Q_i).\]
Similarly, the second derivative is
$$\ddot{q}_i=R\left(\dot\Omega\times Q_i
+\Omega\times(\Omega\times Q_i)\right). $$

Substituting this equation into \eqref{equ:equ_S2-0} we obtain
\beq
\label{equ:equ_S2-2}
\begin{split}
&m_i \left(\dot\Omega\times Q_i +\Omega\times(\Omega\times Q_i)\right)
=2\lambda_i Q_i + \frac{\partial U}{\partial Q_i}.
\end{split}
\eeq

The cross product of $Q_i$ and \eqref{equ:equ_S2-2} yields

\[
m_i \left(
Q_i\times (\dot{\Omega} \times Q_i)
+Q_i \times \left(\Omega\times (\Omega\times Q_i)\right)
\right)
=Q_i \times \frac{\partial U}{\partial Q_i}.
\]

Namely,
\beq\label{equ:equ_S2-3}
m_i Q_i\times (\dot{\Omega} \times Q_i)
+m_i \Omega \times \left(Q_i\times (\Omega\times Q_i)\right)
=Q_i \times \frac{\partial U}{\partial Q_i}.
\eeq
Here, we have used the identity
\[Q_i\times\left(\Omega\times(\Omega\times Q_i)\right)
=\Omega\times\left(Q_i\times(\Omega\times Q_i)\right).\]
The equation \eqref{equ:equ_S2-3} is the main equation.
In the following,
we will use this equation and its outcomes to prove the Theorem \ref{thm:S2-0}.

In the above, we defined $\Omega$ from the anti-symmetric matrix
$R^{-1}\dot{R}$.
Similarly, we can define another anti-symmetric matrix $\dot{R}R^{-1}$,
and the corresponding $\omega$.
Namely,
\begin{equation}\label{defSmallOmega}
\dot{q}_i=\dot{R}Q_i=\dot{R}R^{-1}q_i=\omega\times q_i.
\end{equation}
The $\Omega$ and $\omega$ are related by $\omega=R\Omega$.

\section{Euler's equations}\label{sec:Euler}
In this section, we derive the equations that describe the rotation of the rotating frame, i.e., Euler's equations, and rewrite them in their standard form.

Using \eqref{SO3DiffForm}, 
\eqref{equ:equ_S2-3} yields
\[  \sum_i m_i Q_i\times (\dot{\Omega} \times Q_i)
+ \sum_i m_i \Omega \times \left(Q_i\times (\Omega\times Q_i)\right)=0.      \]
Define 
\[ C=\sum_i m_i Q_i \times (\Omega\times Q_i),  \]
which is the angular momentum in the rotating frame. Since
\beq\label{equ:cC}
 c=\sum_i m_i q_i \times \dot{q}_i= \sum_i m_i (RQ_i) \times (R (\Omega\times Q_i))= R C.  
\eeq
Then the above equation implies that 
\beq\label{equ:Euler-0}
\dot{C}=- \Omega\times C.
\eeq
Note that equation \eqref{equ:Euler-0} is nothing but Euler's equations.

\begin{proposition}\label{Prop:S2-1}
		Given a  rigid motion in the $N$-body gravitational problem on $\mathbb{S}^2$, let the rotating frame be attached to the particles, if the angular velocity \(\Omega\) is a constant vector, then the rigid motion is a relative equilibrium. 
	\end{proposition}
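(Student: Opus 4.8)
The plan is to exploit the relation $\omega = R\Omega$ between the angular velocity $\Omega$ in the rotating (body) frame and the angular velocity $\omega$ in the fixed (space) frame, established just after \eqref{defSmallOmega}, and to show that the hypothesis $\dot\Omega = 0$ forces $\omega$ to be constant as well. Once $\omega$ is a constant vector, the identity $\dot q_i = \omega\times q_i$ of \eqref{defSmallOmega} is exactly the defining property of a relative equilibrium in Definition \ref{def2}. I would emphasize that this implication is essentially kinematic: the equations of motion \eqref{equ:equ_S2-0} enter only through the assumption that we start with an actual rigid-motion solution, so that $R(t)$ and the two angular velocities are well defined.

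First I would recall that $A = R^{-1}\dot R$ is the antisymmetric matrix whose associated vector is $\Omega$, so that $A\xi = \Omega\times\xi$ for every $\xi\in\mathbb{R}^3$, as in \eqref{equ:hat}. The hypothesis that $\Omega$ is constant is then equivalent to $A$ being a constant matrix, and rewriting $A = R^{-1}\dot R$ as $\dot R = RA$ produces a linear matrix ODE with constant coefficients, whose solution is $R(t) = R(0)\exp(At)$.

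Next I would differentiate $\omega = R\Omega$. Since $\dot\Omega = 0$, we get $\dot\omega = \dot R\,\Omega = R A\Omega = R(\Omega\times\Omega) = 0$, using the key algebraic fact $A\Omega = \Omega\times\Omega = 0$, i.e. a vector is annihilated by its own hat map. Hence $\omega$ is a constant vector. Equivalently, from $R(t)=R(0)\exp(At)$ and $A\Omega=0$ one has $\exp(At)\Omega = \Omega$, so $\omega(t) = R(t)\Omega = R(0)\Omega$ is manifestly independent of $t$; this $\omega$ is the permanent axis of rotation. Finally, with $\omega$ constant, \eqref{defSmallOmega} reads $\dot q_i = \omega\times q_i = \xi q_i$ for all $i$, where $\xi\in\sok(3)$ is the fixed antisymmetric matrix corresponding to $\omega$. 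Integrating this linear system gives $q_i(t) = \exp(\xi t)\,q_i(0)$, which is precisely Definition \ref{def2}.

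I do not expect a genuine obstacle here, since this is the easy direction of the equivalence. The only care required is to keep the body-frame angular velocity $\Omega$ and the space-frame angular velocity $\omega$ carefully distinct, and to observe that although in general a constant $\Omega$ need not give a constant $\omega$, the special relation $\omega = R\Omega$ together with $A\Omega = 0$ makes the rotation $\exp(At)$ act trivially along the axis $\Omega$, which is exactly what pins $\omega$ down as time-independent.
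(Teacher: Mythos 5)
Your proof is correct, and it takes a mildly different route from the paper's. The paper stays entirely in the body frame and never mentions $\omega$: after normalizing $R(0)$ to the identity, it observes that constant $\Omega$ makes $A = R^{-1}\dot R$ a constant matrix, solves $\dot R = RA$ to get $R(t) = \exp(At)$, and---since the frame is attached to the particles, so each $Q_i$ is constant and equals $q_i(0)$---reads off $q_i(t) = \exp(At)\,q_i(0)$, which is Definition \ref{def2} with $\xi = A$. You instead pass through the spatial angular velocity, proving the intermediate kinematic fact that $\dot\omega = \dot R\,\Omega = RA\Omega = R(\Omega\times\Omega) = 0$, and then integrating $\dot q_i = \omega\times q_i$ from \eqref{defSmallOmega}. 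The two endings are equivalent: your generator $\xi$ (the antisymmetric matrix with $\xi u = \omega\times u$) equals $R(0)AR(0)^{T}$, which reduces to the paper's $A$ when $R(0)=E$. Each version buys something: the paper's is the most economical, while yours works without normalizing $R(0)$, exhibits the permanent rotation axis $R(0)\Omega$ explicitly, and isolates a reusable observation (constant body-frame angular velocity forces constant space-frame angular velocity). One phrase in your closing remark should be fixed: you write that ``in general a constant $\Omega$ need not give a constant $\omega$,'' but for the two angular velocities of one and the same curve $R(t)$ this never happens---your own computation shows constant $\Omega$ always yields constant $\omega$. The caveat you intend concerns rotating a \emph{generic} constant vector $v$, for which $R(t)v$ indeed need not be constant; what is special about $\Omega$ is precisely that $A\Omega=0$.
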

	
	\begin{proof}
		Let \(q_1(t), \cdots, q_N(t)\) denote the rigid motion in the inertial frame and \(Q_1(t), \cdots, Q_N(t)\) the corresponding motion in the rotating frame attached to the particles. Let \(R(t)\) be the rotation matrix in equation \eqref{equ:qRQ-0}. Without loss of generality, assume \(R(0)\) is the identity matrix.
		
		If \(\Omega\) is a constant vector, then the corresponding anti-symmetric matrix \(A = R^{-1} \dot{R}\) is a constant matrix. Consequently, we have
		\[
		R =  \exp(At) \quad \text{and} \quad q_i(t) = \exp(At)  Q_i \text{ for all } i.
		\]
		Thus, the rigid motion is a relative equilibrium.
	\end{proof}

The kinetic energy $K$  in the rotating frame can be written as 
\[ \sum_i \frac{m_i}{2}|\Omega\times Q_i|^2 = \frac{1}{2}\Omega\cdot C.\]
Thus, 
\beq\label{equ:K}
K=\frac{1}{2}\Omega\cdot C=\mbox{ constant}.
\eeq
This fact also yields 
\beq\notag
\dot{\Omega} \cdot C=0, 
\eeq
since  $\Omega\cdot \dot{C}=-\Omega\cdot(\Omega\times C)=0.$

The conservation of the angular momentum $c$  in the inertial frame implies that 
\beq\label{equ:C}
C\cdot C=c\cdot c=\mbox{ constant}.
\eeq
This can also be checked by using $\dot{C}\cdot C=- (\Omega\times C) \cdot C=0$.

Let  $Q_i=(x_i,y_i,z_i)^T$.   
The inertia tensor $I$ is given by 
\[ I=\left( \begin{array}{ccc}
	\sum_i m_i (y_i^2+z_i^2) & -\sum_i m_i x_i y_i & -\sum_i m_i x_i z_i \\
	-\sum_i m_i y_i x_i & \sum_i m_i (z_i^2+x_i^2) & -\sum_i m_i y_i z_i \\
	-\sum_i m_i z_i x_i & -\sum_i m_i z_i y_i & \sum_i m_i (x_i^2+y_i^2)
\end{array}\right). \]
Then the angular momentum $C$ and the angular velocity  $\Omega=(\Omega_x,\Omega_y,\Omega_z)^T$ are related  by the identity
\beq\label{equ:OmegaC}
C=I\Omega, 
\eeq
since 

\begin{align*}
  C=&\sum_i m_i Q_i \times (\Omega\times Q_i)
  = \sum_i m_i \left(|Q_i|^2 \Omega - Q_i  (\Omega\cdot  Q_i)\right)\\
  &= \left(\sum_i m_i \left(|Q_i|^2 E- Q_i     Q_i^T\right)\right)\Omega,  
\end{align*}
where $E$ is the $3\times 3$ identity matrix.

The inertia tensor $I$ is a real symmetric matrix, therefore it 
possesses 
 three eigenvalues and three orthogonal eigenvectors,   known as the \emph{Principal axes}.
By aligning the coordinate axes   $x, y, z$ with these principal axes, the inertia tensor takes the form
\beq \notag 
I=\left(
\begin{array}{ccc}
\sum_i m_i (y_i^2+z_i^2) &0&0 \\
0& \sum_i m_i (z_i^2+x_i^2) &0\\
0&0& \sum_i m_i (x_i^2+y_i^2)
\end{array}\right).
\eeq
Therefore, the eigenvalues are non-negative and at most one of them could be zero. 

In the following, we assume that  the  coordinate axes   $x, y, z$ of the rotating frame are aligned with 
the three eigenvectors of $I$.  
Thus,  the angular momentum is given by
\beq \notag 
C_\alpha = I_\alpha \Omega_\alpha,\  \mbox{where}\  \alpha \in \{ x, y, z\}. 
\eeq
Then Euler's equations \eqref{equ:Euler-0} become
\begin{equation}\label{equ:Euler-1} 
	\begin{split}
		I_x \dot{\Omega}_x&=(I_y-I_z)\Omega_y\Omega_z,\\
		I_y \dot{\Omega}_y&=(I_z-I_x)\Omega_z\Omega_x,\\
		I_z \dot{\Omega}_z&=(I_x-I_y)\Omega_x\Omega_y.
	\end{split}
\end{equation}
Recall that the system admits  two conservative quantities,
the kinetic energy and the angular momentum given by: 
\beq\label{integrals}
\begin{split}
	2K&= \sum_\alpha I_\alpha \Omega_\alpha^2 =  I_x \Omega_x^2+I_y \Omega_y^2+I_z \Omega_z^2,\\
	|C|^2&= \sum_\alpha I_\alpha^2 \Omega_\alpha^2 = I_x^2 \Omega_x^2+I_y^2 \Omega_y^2+I_z^2 \Omega_z^2. 
\end{split}
\eeq

	By Proposition \ref{Prop:S2-1}, Theorem \ref{thm:S2-0}  will be proved once we establish that, for rigid motions, the solution  of  $\Omega$ must be constant.  We begin with a simple case.

\begin{proposition}\label{Prop:all equal}
For any rigid motions in the $N$-body problem on $\mathbb{S}^2 $, 	if all three eigenvalues of $I$ are equal,
then $\Omega$ is a constant vector.
\end{proposition}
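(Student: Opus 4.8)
The plan is to exploit the fact that equal eigenvalues force the inertia tensor to be isotropic, which makes Euler's equations collapse at once. Writing $I_x=I_y=I_z=:\lambda$ for the common eigenvalue, the symmetric matrix $I$ with these three eigenvalues must equal $\lambda E$, a scalar multiple of the identity. A key point is that such a tensor is invariant under conjugation by any element of $SO(3)$, so here no special alignment of the rotating frame with the principal axes is needed: the tensor reads $\lambda E$ in every orthonormal frame, and in fact the degeneracy of the principal axes is precisely what will make the conclusion immediate.

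Next I would substitute the isotropy into the equations of motion. From \eqref{equ:OmegaC} we have $C=I\Omega=\lambda\,\Omega$, and inserting this into Euler's equation \eqref{equ:Euler-0} gives $\lambda\dot{\Omega}=\dot{C}=-\Omega\times C=-\lambda(\Omega\times\Omega)=0$. Equivalently, one can read the same thing straight off the principal-axis form \eqref{equ:Euler-1}: when $I_x=I_y=I_z$, each of the coefficients $I_y-I_z$, $I_z-I_x$, and $I_x-I_y$ on the right-hand sides vanishes, so $\dot{\Omega}_x=\dot{\Omega}_y=\dot{\Omega}_z=0$.

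The only step requiring a word of justification is that $\lambda\neq 0$, so that the factor may be cancelled. This I would obtain from the trace: since $\operatorname{tr}I=I_x+I_y+I_z=2\sum_i m_i|Q_i|^2=2\sum_i m_i r_i^2>0$ because the masses are positive and $|Q_i|=r_i>0$, we get $\lambda=\tfrac{1}{3}\operatorname{tr}I>0$. Hence $\dot{\Omega}=0$ and $\Omega$ is a constant vector, as claimed. I do not anticipate a genuine obstacle here; the entire content lies in recognizing the isotropy of $I$, and the only mild subtlety worth flagging is the non-uniqueness of the principal axes in the degenerate case, which is exactly the feature that yields such a clean conclusion.
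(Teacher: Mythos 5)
Your proof is correct and takes essentially the same route as the paper, which simply observes that when $I_x=I_y=I_z$ the right-hand sides of Euler's equations \eqref{equ:Euler-1} vanish, forcing $\dot{\Omega}=0$. Your extra step verifying $\lambda=\tfrac{1}{3}\operatorname{tr}I>0$ (so that $I_\alpha\dot{\Omega}_\alpha=0$ really yields $\dot{\Omega}_\alpha=0$) is a careful touch the paper leaves implicit, but it is the same argument.
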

\begin{proof} 
Follows immediately from \eqref{equ:Euler-1}. 
\end{proof}

The other cases, where the eigenvalues of $I$ are not all equal, will be discussed in the following sections.

\section{Symmetrical  case}
\label{sec:symmetric}
In this section, we consider the
symmetric case, 
where two eigenvalues of the inertia tensor \(I\) are equal and the third is different. There are two cases to examine, depending on whether one of the eigenvalues is zero or not. 
We will do the corresponding analysis in the following subsections.

\subsection{Symmetric case with one eigenvalue of $I$ being zero}

\begin{proposition}\label{Prop:symmetric-1}
	If one of the eigenvalues of $I$ is zero, 
	then the rigid motion of the $N$-body problem on $\mathbb{S}^2 $ is a relative equilibrium in the inertial frame.  
\end{proposition}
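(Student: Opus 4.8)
The plan is to first translate the algebraic hypothesis ``$I$ has a zero eigenvalue'' into a geometric statement about the configuration, and then exploit the resulting reduction to a single unit vector. Assume without loss of generality that the vanishing eigenvalue is $I_z$, so that in the principal-axis frame $I_z=\sum_i m_i(x_i^2+y_i^2)=0$. Since all masses are positive, this forces $x_i=y_i=0$ for every $i$, and the constraint $|Q_i|=1$ then gives $Q_i=\pm e_3$ with $e_3=(0,0,1)^T$. Thus every particle sits at one of the two poles of the $z$-axis of the rotating frame, and in the inertial frame $q_i(t)=\epsilon_i\,n(t)$ with $\epsilon_i\in\{+1,-1\}$ fixed in time and $n(t)=R(t)e_3$ a unit vector. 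The entire rigid motion is therefore encoded in the single curve $n(t)$ on $\mathbb{S}^2$.

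The second step is to feed $q_i=\epsilon_i n$ into the two first integrals established in Section \ref{sec:setting}. Writing $M=\sum_i m_i$ and using $\epsilon_i^2=1$, the conserved angular momentum becomes $c=\sum_i m_i q_i\times\dot q_i=M\,(n\times\dot n)$ and the conserved kinetic energy becomes $K=\tfrac{M}{2}|\dot n|^2$. Hence $n\times\dot n$ is a constant vector and $|\dot n|$ is a constant scalar.

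The third step derives the equation of motion of $n$ purely from these two conservation laws. Differentiating $n\times\dot n=\text{const}$ gives $n\times\ddot n=0$, so $\ddot n=f\,n$ for some scalar $f$; differentiating $|n|^2=1$ twice gives $n\cdot\ddot n=-|\dot n|^2$, whence $f=-|\dot n|^2$ is constant, say $f=-k^2$. Therefore $\ddot n+k^2 n=0$, the harmonic-oscillator equation, whose unit-length solutions are exactly uniform rotations about the fixed axis $c/|c|$ (the degenerate case $\dot n\equiv 0$, which occurs precisely when $c=0$, being a fixed configuration). Consequently $n(t)=\exp(\xi t)\,n(0)$ for a fixed $\xi\in\sok(3)$, and therefore $q_i(t)=\epsilon_i\,n(t)=\exp(\xi t)\,q_i(0)$ for all $i$, which is precisely a relative equilibrium.

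I expect the main obstacle to be the temptation to argue directly from Euler's equations \eqref{equ:Euler-1}, which fails here: with $I_x=I_y$ and $I_z=0$ the third equation degenerates to $0=0$, leaving $\Omega_z$ completely undetermined. This is not a defect of the motion but a gauge artifact --- when all particles lie on the $z$-axis, a rotation of the frame about that axis moves no particle, so $\Omega_z$ is physically irrelevant and cannot be pinned down by the dynamics. The resolution, and the conceptual heart of the argument, is to abandon the frame-dependent vector $\Omega$ and instead track the genuinely physical quantity $n(t)=R e_3$, whose motion is fully controlled by the inertial-frame conserved quantities $c$ and $K$.
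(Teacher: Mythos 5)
Your proof is correct, but it takes a genuinely different route from the paper. The paper's proof also begins by deducing $x_i=y_i=0$ from $I_z=0$, but then it works in the Euler-angle parametrization of $R$: choosing the inertial frame so that $c=|c|(0,0,1)^T$, it uses the relations \eqref{equ:eulerangle-Omega-2} to conclude $\cos\theta=0$, hence $\theta=\pi/2$, then combines \eqref{equ:Euler-Kinematics-1} with \eqref{equ:eulerangle-Omega-2} to get $\dot\varphi=|c|/I_x$ constant, and finally computes $q_i=RQ_i$ explicitly to exhibit the uniform rotation (the angle $\psi$ drops out because $B$ fixes the $z$-axis, which is the paper's way of absorbing the same gauge freedom you identify). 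Your argument bypasses the Euler-angle and rotating-frame machinery entirely: you collapse the configuration to the single inertial curve $n(t)=R(t)e_3$, invoke the two first integrals of Section \ref{sec:setting} to get $n\times\dot n$ and $|\dot n|$ constant, and derive the closed equation $\ddot n+k^2n=0$, whose unit-speed solutions on the sphere are exactly uniform rotations about $c/|c|$. What your approach buys: it is more elementary and self-contained (no appendix material needed), it handles the degenerate case $c=0$ explicitly (the paper's step $\cos\theta=0$ tacitly assumes $|c|\neq 0$, though that case is trivially a fixed configuration), and your closing observation --- that Euler's equations \eqref{equ:Euler-1} cannot resolve this case because $\Omega_z$ is a pure gauge when all bodies lie on the rotating $z$-axis, so $\Omega$ genuinely need not be constant even though the motion is a relative equilibrium --- makes explicit the reason this proposition is stated in the inertial frame and proved by a separate method, a point the paper leaves implicit. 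What the paper's route buys is uniformity with the rest of its program: the Euler-angle relations it uses here are developed anyway in Appendix \ref{appd:EulerAngles} and tie this case to the same $\Omega$--$c$ formalism used in Propositions \ref{Prop:symmetric-2}--\ref{Prop:asymmetric-3}. One cosmetic remark: since the paper allows $|Q_i|=r_i$ with $r_i$ depending on $i$, your $\epsilon_i\in\{\pm 1\}$ should be replaced by $z_i$ with $|z_i|=r_i$, and $M=\sum_i m_i$ by $\sum_i m_i z_i^2$; nothing else in your argument changes.
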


Without loss of generality, we can assume $I_z=\sum m_i (x_i^2+y_i^2)=0$.
This implies $x_i=y_i=0$ for all $N$ particles. 
 
Recall that \(q = RQ\), where \(R \in SO(3)\) is the rotation matrix that relates the inertial frame to the rotating frame attached to the particles. The rotation matrix \(R\) can be decomposed into a product of three simpler rotations, given by
\begin{align*}
R=\left(\begin{array}{ccc}
	\cos\varphi & -\sin\varphi & 0 \\
	\sin\varphi& \cos\varphi & 0 \\
	0 & 0 & 1
\end{array}\right) \left(\begin{array}{ccc}
	1 & 0 & 0 \\
	0 & \cos\theta & -\sin\theta \\
	0 & \sin\theta & \cos\theta
\end{array}\right) \left(\begin{array}{ccc}
	\cos \psi & -\sin\psi& 0 \\
	\sin\psi& \cos\psi& 0 \\
	0 & 0 & 1
\end{array}\right).  
\end{align*}

The three angles $\varphi, \theta, \psi$ are called the \emph{ Euler angles}(see  \cite{Goldstein1980,Landau1960} 
for a geometric meaning of the Euler angles). These angles are related to the  angular velocity in the rotating frame  $\Omega$, see the following equations \eqref{equ:Euler-Kinematics-1} and \eqref{equ:eulerangle-Omega-2}. In the appendix \ref{appd:EulerAngles} you can find a complete derivation of these equations. 

\begin{equation}\label{equ:Euler-Kinematics-1} 
	\begin{split}
	\Omega_x &=\dot\varphi\sin\theta\sin\psi+\dot\theta\cos\psi, \\
	\Omega_y &=	\dot\varphi\sin\theta\cos\psi-\dot\theta\sin\psi, \\
	\Omega_z &=\dot\varphi\cos\theta+\dot\psi. 
	\end{split}
\end{equation}
Take the axes  $x, y, z$ of the rotating frame to be parallel to the principal axes of the inertia $I$. Then 
\beq \label{equ:eulerangle-Omega-2}
I_x \Omega_x = |c| \sin\theta \sin\psi, \ \ I_y \Omega_y = |c| \sin\theta \cos\psi, \  \ I_z\Omega_z = |c| \cos\theta. 
\eeq

\begin{proof}[Proof of Proposition \ref{Prop:symmetric-1}]
	
The eigenvalue $I_z=0$ yields 
\[ I_x=I_y,  \ \ Q_i=(0,0,z_i).  \]
By the third equation of \eqref{equ:eulerangle-Omega-2}, $\cos\theta=0$, 
and then 
$\theta=\pi/2$. So equation \eqref{equ:Euler-Kinematics-1} 
reduces to 
\begin{equation}\notag
		\Omega_x =\dot\varphi\sin\psi, \ \ 
		\Omega_y =	\dot\varphi\cos\psi, \ \ 
		\Omega_z =\dot\psi. 
\end{equation}
Substitute the first equation of the above system into the first equation of \eqref{equ:eulerangle-Omega-2}. Then we get
\[   \dot\varphi = \frac{|c|}{I_x}\Rightarrow  \varphi= \varphi_0 + \frac{|c|}{I_x} t.  \]

In the inertial frame,  the position vector of the particles are given by 
\beq \notag
\begin{split}
	q_i
	&=R Q_i=z_i \left(\begin{array}{ccc}
		\cos\varphi & -\sin\varphi & 0 \\
		\sin\varphi& \cos\varphi & 0 \\
		0 & 0 & 1
	\end{array}\right)   \left(\begin{array}{c}0 \\-1 \\0\end{array}\right)	=z_i\left(\begin{array}{c}
		\sin\varphi \\
		-\cos\varphi \\
		0\end{array}\right)\\
&	=z_i\left(\begin{array}{c}
		\cos( \frac{|c|}{I_x} t +\varphi_0 -\pi/2 ) \\
		\sin(\frac{|c|}{I_x} t +\varphi_0 -\pi/2)\\
		0\end{array}\right).
\end{split}
\eeq
Namely, in the inertia frame,
the particles rotate uniformly, 
 and therefore they form  a relative equilibrium. 
\end{proof}

\subsection{System with positive $I_\alpha$}\label{sec:methodology}

From here on  we focus on the case where \(I_\alpha > 0\) for all \(\alpha = x, y, z\). In this subsection, we provide some fundamental observations that will be useful for our subsequent discussions.

Since $I_\alpha>0$, Euler's equations are
\beq \notag
\dot{\Omega}_\alpha
=I_\alpha^{-1}(I_\beta-I_i)\Omega_\beta\Omega_i,
\eeq
then equation \eqref{equ:equ_S2-3} becomes
\beq\notag
|Q_i|^2\dot{\Omega}-(Q_i \cdot  \dot{\Omega} )Q_i
+(Q_i\cdot\Omega)Q_i\times \Omega
=m_i^{-1}Q_i\times \frac{\partial U}{\partial Q_i}.
\eeq
Or, 
\beq\label{equ:equ_S2-4}
\left(|Q_i|^2 E -Q_i Q_i^T\right)  \dot{\Omega}
+(Q_i\cdot\Omega)Q_i\times \Omega
=v_i, 
\eeq
where $v_i$ is a constant vector in time.

By using Euler's  equations \eqref{equ:Euler-1}, the above system reduces to 
\beq\label{equ:equ_S2-5}
\begin{split}
	&\left(\begin{array}{ccc}
y_i^2+z_i^2 & -x_iy_i & -x_iz_i \\
-y_ix_i & z_i^2+x_i^2 & -y_iz_i \\
-z_ix_i & -z_iy_i & x_i^2+y_i^2
\end{array}\right)
\left(\begin{array}{c}
\frac{I_y-I_z}{I_x}  \Omega_y\Omega_z \\
\frac{I_z-I_x}{I_y} \Omega_z\Omega_x \\
\frac{I_x-I_y}{I_z} \Omega_x\Omega_y
\end{array}\right)\\
&\,\,\,+(x_i \Omega_x+y_i \Omega_y+z_i \Omega_z)
\left(\begin{array}{c}
y_i\Omega_z-z_i\Omega_y \\
z_i\Omega_x-x_i\Omega_z \\
x_i\Omega_y-y_i\Omega_x\end{array}\right)
=v_i.
\end{split}
\eeq

Although in equation \eqref{equ:equ_S2-5} appears three equations, 
only two of them are independent.
Because if we take the inner product with $Q_i$ we obtain that $0=0$, which  is obvious from the form of equation \eqref{equ:equ_S2-3}. 

Note that
each row of the left-hand side is a linear sum of 
$\Omega_x\Omega_y$,
$\Omega_y\Omega_z$,
$\Omega_z\Omega_x$,
$\Omega_x^2$, $\Omega_y^2$, $\Omega_z^2$. 
By using the two first integrals defined in equation \eqref{integrals}, 
if $I_x \ne I_z$, we obtain 
\beq\label{equ:Ox Oz Oy}
\begin{split}
		\Omega_x^2
	&=\frac{2KI_z-|C|^2}{I_x(I_z-I_x)}- \frac{I_y(I_z-I_y)}{I_x(I_z-I_x)}\Omega_y^2,\\
	\Omega_z^2
	&=\frac{|C|^2-2KI_x}{I_z(I_z-I_x)}-\frac{I_y(I_y-I_x)} {I_z(I_z-I_x)}\Omega_y^2. 
\end{split}
\eeq

Now, using equations \eqref{equ:Ox Oz Oy},  the system \eqref{equ:equ_S2-5} takes the form 
$$
c^i_{x y} \Omega_x \Omega_y+c^i_{y z} \Omega_y \Omega_z+c^i_{z x} \Omega_z \Omega_x+c^i_{y y} \Omega_y^2=c^i_0,
$$
where $c^i_{* *}$ and $c^i_0$ are vectors that depend on 
$Q_1, ..., Q_N, K$,
and $|C|^2$. Explicitly, 
\beq \label{coef}
\begin{split}
&c^i_{x y}=\left(\begin{array}{c}
-\frac{\left(I_x-I_y+I_z\right)}{I_z} z_i x_i\\
 \frac{\left(-I_x+I_y+I_z\right)}{I_z} y_i z_i\\
 \left(x_i^2-y_i^2\right)+\frac{\left(I_x-I_y\right)}{I_z}\left(x_i^2+y_i^2\right) 
\end{array}\right), \, 
c^i_{y z}=\left(\begin{array}{c}
(y_i^2-z_i^2)+\frac{I_y-I_z}{I_x}\left(y_i^2+z_i^2\right)\\
-\frac{\left(I_x+I_y-I_z\right)}{I_x} x_i y_i\\
\frac{\left(I_x-I_y+I_z\right)}{I_x} z_i x_i
\end{array}\right), \\
& c^i_{z x}=\left(\begin{array}{c}
\frac{I_x+I_y-I_z}{I_y} x_i y_i\\
 (z_i^2-x_i^2)+ \frac{ \left(I_z-I_x\right)}{I_y}\left(z_i^2+x_i^2\right)\\
 \frac{\left(I_x-I_y-I_z\right)}{I_y} y_i z_i
\end{array}\right), \, 
 c^i_{y y}=\left(\begin{array}{c}
\left(-1- \frac{I_y\left(I_y-I_x\right)}{I_z \left(I_z-I_x\right) } \right) y_i z_i\\
\frac{\left(I^2_x+I^2_z-I_y(I_x+I_z)\right) I_y}{\left(I_x-I_z\right) I_z I_x} z_i x_i\\
\left(1+ \frac{I_y\left(I_y-I_z\right)}{ I_x\left(I_x-I_z\right) } \right) x_i y_i
\end{array}\right). 
\end{split}
\eeq
As shown above,
$c_{**}^i$ depends only on $Q_i=(x_i,y_i,z_i)$ and $I_x,I_y,I_z$.

\begin{definition}
	The functions $f_1, f_2, \dots, f_n$ are called linearly independent
	in an interval $[a,b]$,
	if $c_1f_1(t)+c_2f_2(t)+\dots+c_n f_n(t)=0$
	for constants $c_1, c_2, \dots, c_n$ and
	for all $t\in [a,b]$
	implies $c_1=c_2=\dots=c_n=0$ on this interval.
\end{definition}

Our approach to proving Theorem \ref{thm:S2-0} is as follows: We express the equation \eqref{equ:equ_S2-5} in terms of linearly independent functions of \(t\). To satisfy the equation, all coefficients corresponding to these linearly independent functions must be zero. Since there are several coefficients, we found in a number of conditions that must be met. For example, if the five functions \(\Omega_x \Omega_y\), \(\Omega_y \Omega_z\), \(\Omega_z \Omega_x\), \(\Omega_y^2\), and \(1\) are linearly independent, then all coefficients \(c^i_{**}\) must be zero for all \(i\).

\subsection{Symmetrical  case}

\begin{proposition}\label{Prop:symmetric-2}
	For a  rigid motion of the $N$-body problem on $\mathbb{S}^2 $, if  two of the eigenvalues of $I$ are equal and the third eigenvalue is nonzero, 
	then $\Omega$ is a constant vector.
\end{proposition}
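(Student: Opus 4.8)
The plan is to reduce to a normalized symmetric configuration and then extract a contradiction from the full rotational equations \eqref{equ:equ_S2-5} by a frequency decomposition. First I would relabel the principal axes so that the distinct eigenvalue lies on the $z$-axis, giving $I_x = I_y =: I \neq I_z$. Since the eigenvalues are non-negative with at most one zero, and the distinct one is nonzero by hypothesis, all three are positive; hence the reduced form $c^i_{xy}\Omega_x\Omega_y + c^i_{yz}\Omega_y\Omega_z + c^i_{zx}\Omega_z\Omega_x + c^i_{yy}\Omega_y^2 = c^i_0$ with the coefficients \eqref{coef} is available. The third equation of \eqref{equ:Euler-1} immediately yields $\dot\Omega_z = 0$, so $\Omega_z$ is a constant $\omega_z$, while the first two combine into $\dot\Omega_x = -\nu\Omega_y$ and $\dot\Omega_y = \nu\Omega_x$ with $\nu = (I_z - I)\omega_z/I$. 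Thus $(\Omega_x,\Omega_y)$ rotates rigidly at frequency $\nu$, i.e. $\Omega_x = A\cos(\nu t + \phi)$ and $\Omega_y = A\sin(\nu t + \phi)$ for constants $A,\phi$.

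Next I would dispose of the degenerate subcases. If $\omega_z = 0$ then $\nu = 0$ and the two remaining components are constant, so $\Omega$ is constant; if $A = 0$ then $\Omega = (0,0,\omega_z)$ is constant. The only case to exclude is $\omega_z \neq 0$ and $A \neq 0$, in which $\nu \neq 0$ (here I use $I \neq I_z$) and the precession is genuine. The key observation is that $\Omega_x\Omega_y = \tfrac{A^2}{2}\sin(2\nu t + 2\phi)$ and $\Omega_y^2 = \tfrac{A^2}{2} - \tfrac{A^2}{2}\cos(2\nu t + 2\phi)$ carry pure frequency $2\nu$, whereas $\Omega_y\Omega_z$ and $\Omega_z\Omega_x$ carry only frequency $\nu$ and $c^i_0$ is constant. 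Because $\{1,\, \sin(\nu t + \phi),\, \cos(\nu t + \phi),\, \sin(2\nu t + 2\phi),\, \cos(2\nu t + 2\phi)\}$ are linearly independent for $\nu \neq 0$, matching the frequency-$2\nu$ part of \eqref{equ:equ_S2-5} forces the vector coefficients of $\Omega_x\Omega_y$ and $\Omega_y^2$ to vanish, i.e. $c^i_{xy} = 0$ and $c^i_{yy} = 0$ for every $i$.

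Finally I would substitute $I_x = I_y = I$ into \eqref{coef}, which collapses the coefficient vectors to $c^i_{xy} = (-z_i x_i,\, y_i z_i,\, x_i^2 - y_i^2)^T$ and $c^i_{yy} = (-y_i z_i,\, -z_i x_i,\, 2x_i y_i)^T$. Their simultaneous vanishing gives $x_i^2 = y_i^2$ together with $x_i y_i = 0$, which forces $x_i = y_i = 0$ for all $i$. But then $I_z = \sum_i m_i(x_i^2 + y_i^2) = 0$, contradicting the hypothesis that the third eigenvalue is nonzero; hence the genuine-precession case cannot occur and $\Omega$ must be constant. I expect the main obstacle to be organizational rather than deep: verifying that the frequency-$2\nu$ terms genuinely decouple from the frequency-$\nu$ and constant terms (so that only $c^i_{xy}$ and $c^i_{yy}$ enter the $2\nu$ balance), and confirming that these two coefficient vectors simplify as claimed under $I_x = I_y$. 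Once that is in hand, the concluding contradiction via $I_z = 0$ is immediate.
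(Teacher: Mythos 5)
Your proof is correct and follows essentially the same route as the paper's: solve Euler's equations explicitly in the symmetric case (circular functions of frequency $\nu$), use linear independence of the resulting trigonometric functions to force the quadratic spatial coefficients to vanish, conclude $x_i = y_i = 0$ for all $i$, and contradict $I_z > 0$. The only cosmetic difference is that you read the coefficients off the reduced system with the vectors \eqref{coef}, whereas the paper works directly with the third component of \eqref{equ:equ_S2-5} and the linear independence of $\Omega_x,\ \Omega_y,\ \Omega_x\Omega_y,\ \Omega_y^2-\Omega_x^2,\ 1$; both are legitimate here since $I_x = I_y \neq I_z$ and all eigenvalues are positive.
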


\begin{proof}
Without loss of generality, we can assume 
$I_x=I_y=I_0\ne I_z$ and $I_z\ne 0$.
The Euler's equations \eqref{equ:Euler-1}  are 
\beq \label{equ:Euler-2} 
\begin{split}
I_0 \dot{\Omega}_x&=(I_0-I_z)\Omega_y\Omega_z,\\
I_0 \dot{\Omega}_y&=(I_z-I_0)\Omega_z\Omega_x,\\
I_z \dot{\Omega}_z&=0.
\end{split}
\eeq
Therefore $\Omega_z=$ constant.
Using
\beq \notag 
k=\frac{I_z-I_0}{I_0}\Omega_z = \mbox{ constant},
\eeq
the equations \eqref{equ:Euler-2}  reduce to
\beq \notag 
\dot{\Omega}_x=-k\Omega_y,\,
\dot{\Omega}_y=k\Omega_x,
\eeq
whose solution is
\beq
\Omega_x= a \cos(k t + b),\quad
\Omega_y= a \sin(k t + b),\quad
\Omega_z=\frac{I_0}{I_z-I_0}\, k .
\eeq
with constants $a, b$.

If $ak =0$, that is $a=0$ or $k=0$, then $\Omega_\alpha $ is constant  for all $\alpha=x,y,z$ and therefore $\Omega$ is constant.

If  $ak \ne 0$, denote by $E$ the identity matrix of order 3, 
the system \eqref{equ:equ_S2-5} becomes 
\beq \notag 
\begin{split}
	& k 
\left(|Q_i|^2 E-Q_i Q_i^T\right)
	\left(\begin{array}{c}
	  -\Omega_y\\
		\Omega_x \\
		0
	\end{array}\right)
+ 	 (x_i \Omega_x+y_i \Omega_y+z_i \Omega_z )
	\left(\begin{array}{c}
		y_i\Omega_z-z_i\Omega_y \\
		z_i\Omega_x-x_i\Omega_z   \\
		x_i\Omega_y-y_i\Omega_x\end{array}\right)
	=\left(\begin{array}{c}v_x \\v_y \\v_z\end{array}\right). 
\end{split}
\eeq
The last equation of the above system is 
\beq\notag 
\begin{split}
 &k z_i x_i \Omega_y -k z_i y_i \Omega_x + (x_i^2-y_i^2)\Omega_x \Omega_y+ x_i y_i (\Omega_y^2-\Omega_x^2) \\
 & + \, z_i \Omega_z (x_i \Omega_y-y_i \Omega_x) = v_z.
\end{split}
\eeq

Note that the five functions 
\[ \Omega_x,\,  \Omega_y, \, \Omega_x \Omega_y, \, \Omega_y^2-\Omega_x^2,\,  1     \]
are linearly independent, and $\Omega_z$ is constant.
Thus, 
\[  x_i ^2-y_i^2=0, \ \ \ \, \,  x_iy_i=0.  \]
Namely, $x_i=y_i=0$ for all $k$. 
This makes $I_z=0$, which contradicts $I_z>0$. Thus, it should hold that $ak =0$, and   $\Omega_\alpha $ is constant  for all $\alpha=x,y,z$.
\end{proof}

\section{Asymmetric  case}
\label{sec:asymmetric}
In this section, we consider the asymmetric 
case,  
where the three eigenvalues of \(I\) are distinct. In this case, none of the eigenvalues are zero; otherwise, the remaining two eigenvalues would have to be equal. Without loss of generality, we assume
\beq\notag
0<I_x<I_y<I_z. 
\eeq

From the two first integrals, kinetic energy and angular momentum given in equation \eqref{integrals}, 
we obtain:
\beq \notag 
2KI_x \le |C|^2 \le 2KI_z.
\eeq

First we  consider three special cases. 

\begin{proposition}\label{Prp:asymmetric1}
For a  rigid motion of the $N$-body problem on $\mathbb{S}^2 $, assume that \(0<I_x<I_y<I_z\). 	The angular velocity $\Omega$ is a constant vector in the following three cases
	\begin{enumerate}
		\item[i)]  $|C|^2=2KI_x$;
		\item[ii)] $|C|^2=2KI_z$;
		\item[iii)]  $|C|^2=2KI_y$, and  $2K=I_y \Omega_y^2$. 
	\end{enumerate}
\end{proposition}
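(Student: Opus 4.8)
```latex
\textbf{Proof proposal.} The plan is to exploit the conserved quantities
\eqref{integrals} together with Euler's equations \eqref{equ:Euler-1} to
show that in each of the three cases the dynamics of $\Omega$ degenerates to
an equilibrium. The guiding principle is that the kinetic energy ellipsoid
$2K=\sum_\alpha I_\alpha\Omega_\alpha^2$ and the angular momentum sphere
$|C|^2=\sum_\alpha I_\alpha^2\Omega_\alpha^2$ intersect in curves along which
$\Omega$ travels, and the three listed values of $|C|^2$ are precisely the
degenerate intersections where the curve collapses to isolated points. I
will treat the three cases by combining the two first integrals to solve for
two of the squared components in terms of the third, exactly as in
\eqref{equ:Ox Oz Oy}, and then argue that the remaining free component is
forced to vanish or be constant.

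First I would handle case i), $|C|^2=2KI_x$. Subtracting $I_x$ times the
energy integral from the angular momentum integral gives
\[
|C|^2-2KI_x=\sum_\alpha I_\alpha(I_\alpha-I_x)\Omega_\alpha^2
=I_y(I_y-I_x)\Omega_y^2+I_z(I_z-I_x)\Omega_z^2,
\]
and since $0<I_x<I_y<I_z$ both coefficients on the right are strictly
positive. Hence $|C|^2=2KI_x$ forces $\Omega_y=\Omega_z=0$ identically, so
$\Omega=(\Omega_x,0,0)$; then the first Euler equation in
\eqref{equ:Euler-1} gives $I_x\dot\Omega_x=(I_y-I_z)\Omega_y\Omega_z=0$, so
$\Omega_x$ is constant and $\Omega$ is a constant vector. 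Case ii),
$|C|^2=2KI_z$, is entirely symmetric: subtracting $I_z$ times the energy
integral yields
\[
|C|^2-2KI_z=I_x(I_x-I_z)\Omega_x^2+I_y(I_y-I_z)\Omega_y^2,
\]
where now both coefficients are strictly negative, forcing
$\Omega_x=\Omega_y=0$ and hence $\Omega=(0,0,\Omega_z)$ with $\Omega_z$
constant by the third Euler equation. These two cases are the rotations
about the principal axes of smallest and largest moment of inertia.

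Case iii) is the genuinely subtle one, since $|C|^2=2KI_y$ corresponds to the
separatrix through the intermediate axis, where generic solutions are the
asymptotic (heteroclinic) orbits that are \emph{not} constant. Here I would
first use $|C|^2=2KI_y$ to write
\[
0=|C|^2-2KI_y=I_x(I_x-I_y)\Omega_x^2+I_z(I_z-I_y)\Omega_z^2,
\]
whose two coefficients have opposite signs, so this relation alone only
constrains the solution to the separatrix and does not by itself pin down
$\Omega$. The extra hypothesis $2K=I_y\Omega_y^2$ is exactly what rescues the
argument: substituting it into the energy integral gives
$I_x\Omega_x^2+I_z\Omega_z^2=2K-I_y\Omega_y^2=0$, and since $I_x,I_z>0$ this
forces $\Omega_x=\Omega_z=0$ identically. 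Then $2K=I_y\Omega_y^2$ shows
$\Omega_y$ is constant, and the middle Euler equation
$I_y\dot\Omega_y=(I_z-I_x)\Omega_z\Omega_x=0$ confirms it; thus
$\Omega=(0,\Omega_y,0)$ is constant. I expect the main conceptual point, and
the part that deserves the most careful wording, to be emphasizing
\emph{why} case iii) requires the supplementary condition $2K=I_y\Omega_y^2$
while cases i) and ii) do not: the intermediate-axis rotation sits on the
unstable separatrix of the free Euler flow, so one must exclude the
nonconstant asymptotic solutions by hand, whereas the extreme-axis cases are
automatically equilibria because the intersection of the energy and momentum
level sets already degenerates to a single point.
```
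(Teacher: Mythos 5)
Your proposal is correct and follows essentially the same argument as the paper: combine the two first integrals \eqref{integrals} to force two components of $\Omega$ to vanish, then invoke Euler's equations \eqref{equ:Euler-1} to conclude the remaining component is constant. The only cosmetic difference is in case iii), where you substitute $2K=I_y\Omega_y^2$ into the energy integral while the paper substitutes into the angular momentum integral; both are one-line equivalents, and your closing remarks about the separatrix correctly anticipate why the paper must treat the case $|C|^2=2KI_y$, $2K\ne I_y\Omega_y^2$ separately (its Proposition \ref{Prop:asymmetric-3}).
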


\begin{proof}

$i)$ If $|C|^2=2KI_x$, then 
$$I_y(I_y-I_x)\Omega_y^2+I_z(I_z-I_x)\Omega_z^2=0.$$
Namely, $\Omega_y=\Omega_z=0$.
Then Euler's equations \eqref{equ:Euler-1} implies that $\dot \Omega_x=0$. Hence,   $\Omega$ is a constant vector. 

$ii)$ If $|C|^2=2KI_z$, then 
$$I_x(I_z-I_x)\Omega_x^2+I_y(I_z-I_y)\Omega_y^2=0.$$
Namely, $\Omega_x=\Omega_y=0$.
Then Euler's equations \eqref{equ:Euler-1} implies that $\dot \Omega_x=0$. Hence,   $\Omega$ is a constant vector. 

$iii)$ If $|C|^2=2KI_y$
and $2K=I_y \Omega_y^2$,
then 
$I_x^2\Omega_x^2+I_y^2\Omega_y^2+I_z^2\Omega_z^2=I_y^2\Omega_y^2$.
This is realised if and only if
$\Omega_x=\Omega_z=0$.  
Then  Euler's equations \eqref{equ:Euler-1} implies that $\dot \Omega_y=0$. Hence,   $\Omega$ is a constant vector. 
\end{proof}

In the following, we assume that 
\[ 2KI_x < |C|^2 < 2KI_z.   \]
Define $k^2$ as 
\beq\label{defModulus}
k^2=\frac{(I_y-I_x)(2KI_z-|C|^2)}{(I_z-I_y)(|C|^2-2KI_x)}>0.
\eeq
As a function of $|C|^2$,
the denominator is a monotonic increasing function,
and the numerator is monotonic decreasing function.
Therefore, $k^2$ is a monotonic decreasing function.

For $|C|^2=2KI_y$,
\beq \notag 
k^2=\frac{(I_y-I_x)(2KI_z-2KI_y)}{(I_z-I_y)(2KI_y-2KI_x)}=1.
\eeq

  So, the remaining cases can be characterized in term of  $k^2$ by,
\beq \notag
 k^2
    \begin{cases}
  >1&\mbox{ for }|C|^2<2KI_y,\\
  =1&\mbox{ for }|C|^2=2KI_y,\\
  <1&\mbox{ for }|C|^2>2KI_y.
  \end{cases}
  \eeq

In Figure \ref{ellipsoid}, we have drawn some curves for 
 $k^2=1$ and $k^2\ne 1$ 
on the ellipsoid 
$2K = \sum_{\alpha} I_{\alpha} \Omega_{\alpha}^2$.

\begin{figure}[h!]
	\centering
	\includegraphics[scale=0.5]{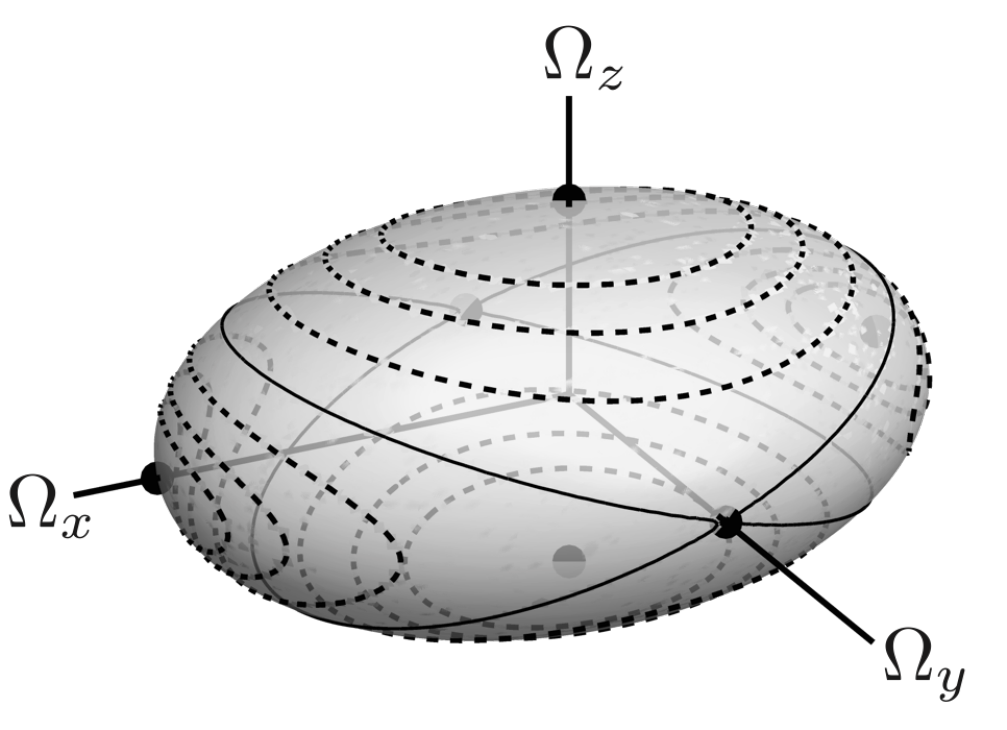}  
	\caption{Ellipsoid for $2 K=\sum_\alpha I_\alpha \Omega_\alpha^2$ for given $K$. The curves represent the contours for $|C|^2=\sum_\alpha I_\alpha^2 \Omega_\alpha^2$ constant.
 The solid, dashed curves represent the contours for 
$k^2=1$, and $k^2\ne 1$, respectively.
The six black balls 
are
the points 
$\Omega=\left(\pm\sqrt{2 K / I_x}, 0,0\right)$
$\left(\mbox{they correspond to } k^2 \rightarrow \infty\right)$,
$\Omega=\left(0, \pm\sqrt{2 K / I_y}, 0\right)$
$\left(k^2=1 \mbox{ and } 2K=I_y\Omega_y^2\right)$,
$\Omega=\left(0,0, \pm\sqrt{2 K / I_z}\right)\left(k^2=0\right)$.
	}\label{ellipsoid}
\notag
\end{figure}

\subsection{Asymmetrical  case  with $0<k^2<\infty$ and $k^2\ne 1$}

In this  subsection we will show that 
\begin{proposition}\label{Prop:asymmetric-2}
	For a  rigid motion of the $N$-body problem on $\mathbb{S}^2 $,  if
	\[  0<I_x<I_y<I_z,  \,  \, 2KI_x < |C|^2 < 2KI_z, \]
	then it is impossible that  $k^2\ne 1$. 
\end{proposition}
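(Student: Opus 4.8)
The plan is to argue by contradiction: assume $0<I_x<I_y<I_z$, $2KI_x<|C|^2<2KI_z$, and $k^2\neq 1$, and derive a contradiction with the main equation \eqref{equ:equ_S2-5}--\eqref{coef}. First I would recall the explicit integration of Euler's equations \eqref{equ:Euler-1} in this regime, as collected in Appendix \ref{appd:EulerAngles}. Since $2KI_x<|C|^2<2KI_z$, the first integrals \eqref{integrals} confine $\Omega$ to a closed level curve on the energy ellipsoid, and the classical Jacobi-elliptic solution applies: up to constant factors, $(\Omega_x,\Omega_y,\Omega_z)$ is a permutation of $(\cn,\sn,\dn)$ evaluated at $\tau=\nu t$, the ordering depending on whether $k^2<1$ (rotation about the $I_z$-axis) or $k^2>1$ (rotation about the $I_x$-axis). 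The modulus lies strictly in $(0,1)$ precisely because the endpoints giving $k^2=0,\infty$ are excluded by the strict inequalities and $k^2=1$ is assumed not to occur; hence $\nu\neq 0$ and each of $\sn,\cn,\dn$ is a genuinely non-constant periodic function. The key structural point is that in both sub-regimes the intermediate component $\Omega_y$ is the $\sn$-factor, so that $\Omega_y^2\propto\sn^2$, consistent with the elimination of $\Omega_x^2,\Omega_z^2$ via \eqref{equ:Ox Oz Oy}.

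Next I would establish that the five functions $\Omega_x\Omega_y,\ \Omega_y\Omega_z,\ \Omega_z\Omega_x,\ \Omega_y^2,\ 1$ are linearly independent as functions of $t$. Because $\sn$ is odd while $\cn,\dn$ are even, these split by parity into an odd pair (of the form $\sn\cn$ and $\sn\dn$) and an even triple ($\cn\dn$, $\sn^2$, $1$). Any vanishing linear combination separates into its odd and even parts; the odd part factors as $\sn\cdot(\alpha\cn+\beta\dn)$ and vanishes only if $\alpha=\beta=0$, since $\cn$ and $\dn$ are linearly independent for modulus in $(0,1)$. Using the identity $\sn^2=1-\cn^2$, the even triple spans the same space as $\{\cn\dn,\cn^2,1\}$, whose independence follows by evaluating where $\cn$ vanishes and again invoking the independence of $\cn$ and $\dn$. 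With this linear independence in hand, the reduced equation \eqref{equ:equ_S2-5} forces every coefficient vector $c^i_{xy},c^i_{yz},c^i_{zx},c^i_{yy}$ and the right-hand side $c^i_0$ in \eqref{coef} to vanish, for each particle $i$.

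Finally I would exploit these vanishing conditions algebraically. The components of \eqref{coef} with sign-definite prefactors under $0<I_x<I_y<I_z$ (for instance $I_x-I_y+I_z>0$ and $-I_x+I_y+I_z>0$) immediately give $z_ix_i=0$ and $y_iz_i=0$ for all $i$. A particle with $x_i=y_i=0$ and $z_i\neq 0$ is excluded, since the first component of $c^i_{yz}=0$ reduces to $z_i^2(I_y-I_z-I_x)/I_x$, which is nonzero because $I_y<I_z$. Hence $z_i=0$ for every particle, which forces the planar identity $I_z=I_x+I_y$. In this planar situation the third component of $c^i_{yy}=0$ collapses to $2x_iy_i=0$, while the third component of $c^i_{xy}=0$ becomes $I_xx_i^2=I_yy_i^2$; together these yield $x_i=y_i=0$, so $Q_i=0$, contradicting $|Q_i|=1$. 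This contradiction shows that $k^2\neq 1$ is impossible, proving the proposition.

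The hard part will be twofold. First, one must pin down the correct elliptic-function assignment uniformly across both sub-regimes $k^2<1$ and $k^2>1$ (including the reciprocal-modulus normalization in the latter) so that the parity-based linear-independence argument applies verbatim in each case. Second, the generic sign-definiteness arguments degenerate exactly on the planar configurations $I_z=I_x+I_y$, which are forced here; closing the contradiction in that case requires the $c^i_{yy}$ coefficient rather than the linear-in-$z_i$ relations, so the computation of that third component is the delicate step.
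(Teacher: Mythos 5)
Your proposal is correct, and its skeleton is the same as the paper's: explicit Jacobi-elliptic solutions of Euler's equations \eqref{equ:Euler-1}, linear independence of $\Omega_x\Omega_y,\ \Omega_y\Omega_z,\ \Omega_z\Omega_x,\ \Omega_y^2,\ 1$, hence vanishing of every coefficient vector in \eqref{coef}, and then an algebraic contradiction. You deviate in two places, both legitimate. First, the paper proves Lemma \ref{Lem:asymmetric-1} by expanding a putative vanishing combination in powers of $\tau$ up to order $\tau^4$ and checking that the resulting linear system forces all five coefficients to vanish when $k^2\ne 1$; your parity argument (odd part $\sn\cdot(\alpha\,\cn+\beta\,\dn)$, even part reduced via $\sn^2=1-\cn^2$ and evaluation at a zero of $\cn$, where $\dn=\sqrt{1-k^2}\ne 0$) is more structural and makes transparent exactly where the hypothesis $k^2\ne 1$ enters: at $k^2=1$ one has $\cn=\dn=1/\cosh$ and the independence collapses, which is precisely why Proposition \ref{Prop:asymmetric-3} requires separate treatment. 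As you note, for $k^2>1$ this argument needs the reciprocal-modulus normalization so that the modulus lies in $(0,1)$; since that transformation merely swaps $\cn$ and $\dn$ up to nonzero constants, the set of five functions is preserved and the argument goes through. Second, the endgame: the paper selects the third entries of $c^i_{xy}$ and $c^i_{yy}$, whose prefactors are sign-definite under $0<I_x<I_y<I_z$, and obtains $x_i=y_i=0$ for all $i$ in one step, contradicting $I_z>0$; you instead first extract $z_ix_i=y_iz_i=0$ from sign-definite entries, exclude particles with $x_i=y_i=0$, $z_i\ne 0$ via the first entry of $c^i_{yz}$, conclude $z_i=0$ for all $i$ and the planar identity $I_z=I_x+I_y$, and only then derive $x_iy_i=0$ and $I_xx_i^2=I_yy_i^2$ to force $Q_i=0$, contradicting $|Q_i|=1$. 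Both contradictions are sound; your route is longer but your caution about entries carrying the factor $I_x+I_y-I_z$ (which do degenerate on planar configurations) is well placed, even though it becomes unnecessary once one uses the paper's two sign-definite entries directly.
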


\begin{proof}
Perform   the change of variable, 
\beq \label{equ:change variable}
	d\tau=dt \sqrt{\frac{(I_z-I_y)(|C|^2-2KI_x)}{I_xI_yI_z}},
\eeq
and take  $\tau=0$ for $\Omega_y=0$. Then, the solutions for Euler's  equations \eqref{equ:Euler-1} are 
\beq
\begin{split}\notag
	\Omega_x&=\cn(\tau, k)\sqrt{\frac{2KI_z-|C|^2}{I_x(I_z-I_x)}},\\
	\Omega_y&=\sn(\tau,k)\sqrt{\frac{2KI_z-|C|^2}{I_y(I_z-I_y)}},\\
	\Omega_z&=\dn(\tau,k)\sqrt{\frac{|C|^2-2KI_x}{I_z(I_z-I_x)}},
\end{split}
\eeq
where  $\cn(\tau, k), \sn(\tau, k), \dn(\tau, k)$ are Jacobi elliptic functions, and $k$ is the elliptic modulus \cite{Prasolov1997}.  See the appendix \ref{appd:explicit solutions} or \cite{Landau1960} for a complete derivation of them.

In order to finish the proof of Proposition 
\ref{Prop:asymmetric-2}
we need the following result. 

\begin{lemma}\label{Lem:asymmetric-1}
	The functions
	$\Omega_x\Omega_y, \Omega_y\Omega_z,\Omega_z\Omega_x$,
	$\Omega_y^2$, and $1$ are linearly independent, as functions of $t$. 
\end{lemma}
\begin{proof}
	We denote \(\cn(\tau, k)\), \(\sn(\tau, k)\), and \(\dn(\tau, k)\) simply as \(\cn\), \(\sn\), and \(\dn\) respectively.
	Since all constants written after $\cn, \sn, \dn$ are not zero, it suffices to show 
	the linear independence of 
	$\cn\,\sn, \sn\,\dn,\dn\,\cn$, $\sn^2$ and $1$, as functions of $\tau$. 
	Consider the series expansion of
	\beq \notag
	a_1\, \cn\,\sn + a_2\,\sn\,\dn+a_3\,\dn\,\cn+a_4\,\sn^2 + a_5=0
	\eeq
	to order $\tau^4$.
	We get
	\beq
	\sum_{n=0}^4 c_n \tau^n +O(\tau^5)=0. 
	\eeq
	The coefficients $c_n, n=0,1,2,3,4$ are 
		
	$$c_0 =	a_3+a_5, \quad c_1 = a_1+a_2, \quad c_2 = a_4 = -\frac{1}{2} a_3 \left(k^2+1\right),$$
	$$c_3 = -\frac{1}{6} a_1 \left(k^2+4\right)-\frac{1}{6} a_2 \left(4 k^2+1\right),\quad 
	c_4 =	\frac{1}{24} \left(a_3 \left(k^4+14 k^2+1\right)-8 a_4 \left(k^2+1\right)\right)$$

	We can verify that
	the solution for $c_n=0$, $n=0,1,2,3,4$ is 
	$a_1=a_2=a_3=a_4=a_5=0$ for $k^2\ne 1$.
\end{proof}

Now, according to the discussion in Sec \ref{sec:methodology} and  Lemma \ref{Lem:asymmetric-1}, all coefficients vectors $c^i_{**}$ from \eqref{coef}  should be zero. In particular, the third entry of $c^i_{xy}$ and $c^i_{yy}$ is zero, i.e.,  
\beq \notag 
\begin{split}
&\left(x_i^2-y_i^2\right)+\frac{\left(I_x-I_y\right)}{I_z}\left(x_i^2+y_i^2\right) =0,\\
&\left(1+ \frac{I_y\left(I_y-I_z\right)}{ I_x\left(I_x-I_z\right) } \right) x_i y_i=0
\end{split}
\eeq
Namely,  $x_i=y_i=0$ for all $i$.
But, this contradicts with $I_z>0$. 

This finishes the proof of 
Proposition \ref{Prop:asymmetric-2}. 
\end{proof}

\subsection{Asymmetrical case with $k^2=1$ and 
$2K\ne I_y \Omega_y^2$}

In this  subsection, we show that 
\begin{proposition}\label{Prop:asymmetric-3}
	For a  rigid motion of the $N$-body problem on $\mathbb{S}^2 $,  assume that 
	\[  0<I_x<I_y<I_z,  \qquad  |C|^2 =2KI_y. \]
	Then it is impossible that   $2K\ne I_y \Omega_y^2$.
\end{proposition}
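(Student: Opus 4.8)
The plan is to mirror the strategy of Proposition~\ref{Prop:asymmetric-2}, but to account for the degeneration of the elliptic functions at $k^2=1$. First I would record the solution of Euler's equations on the separatrix. Since $|C|^2=2KI_y$ gives $k^2=1$ by \eqref{defModulus}, and since the hypothesis $2K\neq I_y\Omega_y^2$ excludes the limiting equilibria $\Omega=(0,\pm\sqrt{2K/I_y},0)$ already treated in Proposition~\ref{Prp:asymmetric1}~iii), the solution from Section~\ref{sec:asymmetric} specializes, using $\sn(\tau,1)=\tanh\tau$ and $\cn(\tau,1)=\dn(\tau,1)=\operatorname{sech}\tau$, to
\[
\Omega_x=a_x\operatorname{sech}\tau,\qquad \Omega_y=a_y\tanh\tau,\qquad \Omega_z=a_z\operatorname{sech}\tau,
\]
with explicit nonzero constants $a_x,a_y,a_z$ and a positive constant rescaling $d\tau/dt$.

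The crucial structural difference from the case $k^2\neq1$ is that Lemma~\ref{Lem:asymmetric-1} fails here: the coincidence $\cn=\dn$ forces $\Omega_x\Omega_y$ and $\Omega_y\Omega_z$ to be proportional (both are multiples of $\operatorname{sech}\tau\,\tanh\tau$), while $\Omega_z\Omega_x\propto\operatorname{sech}^2\tau$ and $\Omega_y^2\propto\tanh^2\tau=1-\operatorname{sech}^2\tau$. Hence every term in \eqref{equ:equ_S2-5} is a linear combination of only the three functions $\operatorname{sech}\tau\,\tanh\tau$, $\operatorname{sech}^2\tau$, and $1$. I would next prove the short independence lemma that these three are linearly independent as functions of $t$: since the first is odd in $\tau$ while the other two are even, its coefficient must vanish; letting $\tau\to\infty$ then kills $\operatorname{sech}^2\tau$ and isolates the constant, after which the remaining term forces the last coefficient to vanish as well.

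With independence in hand, I would substitute the separatrix solution into \eqref{equ:equ_S2-4} (equivalently \eqref{equ:equ_S2-5}) and collect the coefficients of $\operatorname{sech}\tau\,\tanh\tau$, $\operatorname{sech}^2\tau$, and $1$. Setting the first two coefficient vectors to zero gives, for every particle $i$, the purely geometric identities
\[
a_x\,c^i_{xy}+a_z\,c^i_{yz}=0,\qquad a_za_x\,c^i_{zx}-a_y^2\,c^i_{yy}=0,
\]
where the $c^i_{**}$ are the vectors of \eqref{coef}; the constant coefficient absorbs the torque term $v_i$ from \eqref{equ:equ_S2-4} and need not be used. Reading off individual components of these two vector equations, together with the explicit values of $a_x,a_y,a_z$ and the principal-axis relations $\sum_i m_i x_iy_i=\sum_i m_i y_iz_i=\sum_i m_i z_ix_i=0$, I would show that every particle must satisfy $x_i=y_i=0$. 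This makes $I_z=\sum_i m_i(x_i^2+y_i^2)=0$, contradicting the standing assumption $I_z>0$ of the asymmetric case, and completes the proof.

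The hard part will be extracting the contradiction in the final step. Because only three independent time-functions survive here (against five in Proposition~\ref{Prop:asymmetric-2}), the coefficient vectors $c^i_{**}$ no longer vanish individually; instead the geometric constraints appear bundled in the two combinations above, so a priori they are weaker and might seem to admit spurious configurations. Geometrically this reflects the fact that $\Omega(t)$ itself remains in the fixed plane spanned by $(a_x,0,a_z)$ and the $y$-axis, so the motion never ``probes'' directions transverse to that plane. The work is therefore to combine enough scalar components---the $y$-components of the two vector conditions decouple most cleanly---and to use the precise values of the constants $a_x,a_y,a_z$ and of $d\tau/dt$ to rule these configurations out and force $x_i=y_i=0$ for all $i$.
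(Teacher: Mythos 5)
Your setup is correct and coincides with the paper's: the separatrix form of the solution, the observation that every term of \eqref{equ:equ_S2-5} collapses onto the three functions $\tanh\tau/\cosh\tau$, $1/\cosh^2\tau$, $1$, and the two bundled conditions $a_x c^i_{xy}+a_z c^i_{yz}=0$ and $a_z a_x c^i_{zx}-a_y^2 c^i_{yy}=0$ are all right; the second one is, up to the nonzero factor $\Omega_0^2$, exactly the paper's requirement that the coefficient of $\Omega_y^2$, namely $c^i_{yy}-\frac{I_y}{I_z-I_x}\sqrt{\frac{(I_z-I_y)(I_y-I_x)}{I_x I_z}}\,c^i_{zx}$, vanish. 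But your argument stops exactly where the mathematical content of the proposition begins. The passage from these bundled vector identities to a contradiction is only announced (``I would show that every particle must satisfy $x_i=y_i=0$'') and is explicitly deferred as ``the hard part''; no component is actually read off, no determinant is computed, no case is analyzed. That missing step \emph{is} the theorem.

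Moreover, the two concrete hints you give point the wrong way. First, the $y$-components do not suffice: the $y$-component of your first condition is a single linear relation between $x_iy_i$ and $y_iz_i$, and the $y$-component of the second is a single homogeneous quadratic in $(x_i,z_i)$; together they still admit nonzero configurations (for instance particles with $y_i=0$ and $(x_i,z_i)$ in the zero set of that quadratic), so they cannot force $x_i=y_i=0$. What works, and is what the paper does, is to take the \emph{first and third} components of the $\Omega_y^2$-condition, view them as a homogeneous $2\times 2$ linear system in the unknowns $(x_iy_i, y_iz_i)$, and compute its determinant, $2 I_y (I_x-I_z)^2 (I_x-I_y+I_z)/(I_x I_z)\neq 0$, which yields $x_iy_i=y_iz_i=0$ for every $i$. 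Second, your target contradiction is not the one that falls out. The dichotomy $x_iy_i=y_iz_i=0$ leaves open the case $y_i\neq 0$, $x_i=z_i=0$, which is eliminated not by principal-axis relations but by returning to the third scalar equation of \eqref{equ:equ_S2-5}: for such a particle it reduces to $y_i^2\left(\frac{I_x-I_y}{I_z}-1\right)\Omega_x\Omega_y=\mbox{constant}$, impossible because $\Omega_x\Omega_y$ is a nonconstant function of $t$ and the coefficient is nonzero. Hence $y_i=0$ for \emph{all} $i$, and the resulting contradiction is $I_y=I_x+I_z>I_z$, not $I_z=0$ as you claim. Until computations of this kind are supplied, the proposal is a plan, not a proof.
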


\begin{proof}
For asymmetrical 
case 
with $k^2=1$ 
(that 
is for $2KI_y = |C|^2)$, 
we perform the same change of variable that in \eqref{equ:change variable}, 
and take  $\tau=0$ for $\Omega_y=0$. Then we obtain the solutions for Euler's  equations \eqref{equ:Euler-1}, 
\beq
\begin{split}\label{Omegas}
\Omega_x&=\Omega_0 \sqrt{\frac{I_y(I_z-I_y)}{I_x(I_z-I_x)}} \frac{1}{\cosh\tau},\\
\Omega_y&=\Omega_0\tanh\tau,\\
\Omega_z&=\Omega_0\sqrt{\frac{I_y(I_y-I_x)}{I_z(I_z-I_x)}}\frac{1}{\cosh\tau},
\end{split}
\eeq
where $\Omega_0=\frac{|C|}{I_y}=\frac{2K}{|C|}= \sqrt{\frac{2K}{I_y}}.$ See the appendix \ref{appd:explicit solutions} or \cite{Whittaker1988} for a complete derivation of these equations.

Then 
\beq\label{equ:OxOy OyOz}
\begin{split}
\Omega_x\Omega_y&=\sqrt{\frac{I_z(I_z-I_y)}{I_x(I_y-I_x)}}\, \Omega_y\Omega_z\\
\Omega_z\Omega_x&=\sqrt{\frac{(I_y-I_x)(I_z-I_y)}{I_zI_x(I_z-I_x)^2}}\,
(2K-I_y\Omega_y^2).
\end{split}
\eeq
The relations between $\Omega_x^2$,   $\Omega_z^2$ and $\Omega_y^2$
are given by equation \eqref{equ:Ox Oz Oy}.

As in the previous case, to finish the proof of Proposition \eqref{Prop:asymmetric-3} we need the following result. 
\begin{lemma}\label{Lem:asymmetric-2}
	The functions
	$\Omega_y\Omega_z$, 
	$\Omega_y^2$, and $1$ are linearly independent, as functions of $t$. 
\end{lemma}

\begin{proof}
It follows immediately by the equations in \eqref{Omegas}.
\end{proof}

	Now, we decompose  system \eqref{equ:equ_S2-5}  into  linear sums of $\Omega_y\Omega_z$,  $\Omega_y^2$,   and constant. 
Using equations  \eqref{equ:OxOy OyOz}, the coefficient of $\Omega^2_y $ is 
	\[  c^i_{yy}-\frac{I_y}{I_z-I_x} \sqrt{\frac{(I_z-I_y)(I_y-I_x)}{I_x I_z}} c^i_{zx}.  \]
By Lemma \ref{Lem:asymmetric-2}, all three entries of the above vector should be zero. In particular,  
considering the first and the third one and  using\eqref{coef}, we obtain 
	\begin{align*}
	&\left(-1- \frac{I_y\left(I_y-I_x\right)}{I_z \left(I_z-I_x\right) } \right) y_i z_i-\frac{I_y}{I_z-I_x} \sqrt{\frac{(I_z-I_y)(I_y-I_x)}{I_x I_z}}\frac{(I_x+I_y-I_z)}{I_y} x_i y_i=0, \\
	&\left(1+ \frac{I_y\left(I_y-I_z\right)}{ I_x\left(I_x-I_z\right) }\right) x_i y_i-\frac{I_y}{I_z-I_x} \sqrt{\frac{(I_z-I_y)(I_y-I_x)}{I_x I_z}}\frac{\left(I_x-I_y-I_z\right)}{I_y} y_i z_i=0. 
\end{align*}
	
	View the two equations as a  linear system on the two unknowns $x_iy_i, y_iz_i$. The linear system
is equivalent to 
\[  \left(\begin{array}{cc} \alpha (I_x+I_y-I_z) & (I_z-I_x)+ \frac{I_y\left(I_y-I_x\right)}{I_z }\\
	(I_x-I_z)+ \frac{I_y\left(I_y-I_z\right)}{ I_x } ) & \alpha(I_x-I_y-I_z)
	\end{array} \right)
	\left(\begin{array}{c}
			x_iy_i\\y_iz_i
	\end{array}\right)
	=  \left(\begin{array}{c}
		0\\0
	\end{array}\right), 
	\]
	where  $\alpha=\sqrt{\frac{(I_z-I_y)(I_y-I_x)}{I_x I_z}}$. 
	The determinant of the coefficient matrix is 
	\[  2 \frac{I_y  (I_x-I_z)^2 (I_x-I_y+I_z)}{I_x I_z}  \ne 0. \]
	So the linear system has only the zero solution. That is 
	\[  	x_iy_i=y_iz_i=0. \]
	
If $y_i\ne 0$ for some $i$,
then $x_i=z_i=0$ must be satisfied.
Then  the third equation of system  \eqref{equ:equ_S2-5} becomes
\beq
y_i^2 \left(
\frac{I_x-I_y}{I_z}-1
\right)\Omega_x\Omega_y
=\mbox{ constant}.
\eeq
This is impossible since $I_x<I_y<I_z$. 
Therefore $y_i=0$ for all $i$. 
This makes $I_y=I_x+I_z$, which is a contradiction to  $I_x<I_y<I_z$.

This finishes the proof of Proposition \ref{Prop:asymmetric-3}.
\end{proof}

\section{Proof of Theorem \ref{thm:S2-0}} \label{sec:proof}
In the previous sections we have proved all the preliminary results, necessary to establish the proof of the main result of this article.

\begin{proof}[Proof of the main result]

Proposition \ref{Prop:S2-1} tell us that in order to prove Theorem \ref{thm:S2-0}, it is enough to prove that $\Omega$ is a constant vector. In this way we split the analysis in three cases:
\begin{itemize}
\item[1)] All the eigenvalues of the inertia tensor $I$ are equal.

This is the easiest case, it follows immediately from the Euler's equation \eqref{equ:Euler-1}.

\item[2)] Only two of the eigenvalues of $I$ are equal.
\begin{itemize}
\item[i)] If one eigenvalue of $I$ is zero, we proved in Proposition 
 \ref{Prop:symmetric-1} that in the inertial frame, the particles rotate 
 uniformly and they form a relative equilibrium.
 \item[ii)] If no eigenvalue is zero, then in Proposition \ref{Prop:symmetric-2}, we proved that $\Omega$ is a constant vector.
\end{itemize}

\item[3)] If the three eigenvalues of $I$ are distinct and none is zero, we proved first in Proposition \ref{Prp:asymmetric1}, that for three particular cases, Theorem \ref{thm:S2-0} holds and in Propositions 
\ref{Prop:asymmetric-2}, and \ref{Prop:asymmetric-3} we prove the remaining cases.
\end{itemize}
\end{proof}

\section{Rigid motions of the N-body problem in $\mathbb{R}^3 $}\label{sec:R3}

In this section, we extend our method to establish the equivalence of rigid motions with relative equilibria for the \(N\)-body problem in \(\mathbb{R}^3\). This result dates back to Lagrange (1772) for \(N=3\) \cite{Lagrange1772} and was generalized by Pizzetti (1904) for arbitrary \(N\) \cite{Pizzetti, Wintner}.  While this result is classical, our approach differs significantly from that of Pizzetti, as we highlight in this section.

The \(N\)-body problem in \(\mathbb{R}^3\) is described by the Lagrangian:
\[ L = \sum_i \frac{m_i}{2} |\dot{q}_i|^2 + U, \]
where the positive masses are denoted by \(m_1, \dots, m_N\) with positions \(q_1, \dots, q_N\), where \(q_i \in \mathbb{R}^3\). The equations of motion are
\begin{equation}\label{equ:L_R3-0}
m_i \ddot{q}_i=\frac{\partial U}{\partial q_i}, \ i=1, \dots, N.  
\end{equation}
We assume that \(U\) is invariant under translations and rotations, i.e.,
\[
U(q_1,q_2,\dots,q_N)=U(Rq_1+u,Rq_2+u,\dots,Rq_N+u),
\]
for any \(u \in \mathbb{R}^3 ,  R \in SO(3)\). When \(U=\sum_{i\ne j} \frac{m_i m_j}{|q_i-q_j|}\), it corresponds to the Newtonian \(N\)-body problem.

A homographic motion is one where the configuration formed by the \(N\) bodies remains similar to itself for all \(t\), taking the form:
\[  q_i(t)=r(t) + \theta(t)R(t)Q_i, \quad  i=1, \dots, N,  \] 
where \(r(t)\) is a vector function, \(\theta(t)>0\) is a scalar function, \(Q_i=q_i(0)\), and \(R(t) \in SO(3)\). By the center of mass and linear momentum integrals, it suffices to assume
\[
q_i(t)=\theta(t)R(t)Q_i, \quad i=1, \dots, N.
\]

Pizzetti studied homographic motions in the Newtonian \(N\)-body problem \cite{Pizzetti}. By substituting this form into the equations of motion, he derived the equation (in matrix notation):
\begin{equation}\label{equ:pizzetti-1}
\theta^2 R^T(\theta R)''Q_i=  \frac{\partial U}{\partial Q_i}.
\end{equation}  
From this, Pizzetti showed that \(R^TR'\) must be either the zero matrix or a constant nonzero matrix, implying that a homographic motion is either homothetic or has a fixed rotation axis. In the latter case, he further proved that the motion must be planar.

Setting \(\theta(t) \equiv 1\), Pizzetti's result immediately implies that any rigid motion must be a relative equilibrium. Moeckel later used this approach to show the equivalence of rigid motions with relative equilibria for the Newtonian \(N\)-body problem in \(\mathbb{R}^3\) \cite{Moeckel1994}.

While Pizzetti's argument applies to the general \(N\)-body problem \eqref{equ:L_R3-0}, we provide a different proof that offers a more general perspective.

\begin{theorem}\label{thm:R3-0}
	Any rigid motion in the \(N\)-body problem in \(\mathbb{R}^3\) \eqref{equ:L_R3-0} is a relative equilibrium.
\end{theorem}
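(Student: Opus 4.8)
The plan is to reduce the $\mathbb{R}^3$ problem to the framework already developed for $\mathbb{S}^2$, exploiting that both share the symmetry group $SO(3)$ and that the governing equation \eqref{equ:equ_S2-3} was derived in a way that never used the sphere constraint essentially. First I would use translation invariance of $U$: since $\sum_i \partial U/\partial q_i = 0$, the total linear momentum is conserved and the center of mass moves uniformly, so after a Galilean change of frame I may assume $\sum_i m_i q_i = 0$ for all $t$. Next I claim that in this frame a rigid motion is a rotation of a fixed configuration. Indeed, writing $M = \sum_i m_i$, the identity $\sum_j m_j |q_i - q_j|^2 = M|q_i|^2 + \sum_j m_j |q_j|^2$ (using $\sum_j m_j q_j = 0$), together with a second summation against $m_i$, shows that both $\sum_j m_j|q_j|^2$ and each $|q_i|^2$ are constant in time; combined with the constancy of $|q_i - q_j|$ this makes every inner product $q_i \cdot q_j$ constant. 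Hence the Gram matrix of the configuration is time-independent, and there exists a smooth curve $R(t) \in SO(3)$ with $R(0)=E$ such that $q_i(t) = R(t) Q_i$, where $Q_i := q_i(0)$. In particular each particle stays on a sphere of radius $r_i = |Q_i|$, so the rigid motion falls within the general constant-$r_i$ setting of Section \ref{sec:setting}.

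With this in hand, I would transcribe the whole $\mathbb{S}^2$ argument. Rotation invariance of $U$ gives \eqref{SO3DiffForm} exactly as before. Passing to the rotating frame attached to the particles, one has $\dot q_i = R(\Omega \times Q_i)$ and $\ddot q_i = R(\dot\Omega \times Q_i + \Omega \times (\Omega \times Q_i))$, and substituting into \eqref{equ:L_R3-0} followed by crossing with $Q_i$ yields precisely equation \eqref{equ:equ_S2-3}. The only difference from the sphere is the absence of the constraint term $2\lambda_i Q_i$, but this is immaterial: that term would contribute $Q_i \times (2\lambda_i Q_i) = 0$ under the cross product, so \eqref{equ:equ_S2-3} holds verbatim. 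Consequently Euler's equations \eqref{equ:Euler-0}, their diagonal form \eqref{equ:Euler-1}, the conserved quantities \eqref{integrals}, and the inertia tensor $I = \sum_i m_i(|Q_i|^2 E - Q_i Q_i^T)$ are all identical. The entire case analysis of Sections \ref{sec:Euler}--\ref{sec:asymmetric} --- splitting according to whether the eigenvalues of $I$ are all equal, two equal, or distinct, and the linear-independence arguments forcing the coefficient vectors \eqref{coef} to vanish --- uses only the structure of \eqref{equ:equ_S2-3}, the positivity of the masses, and the algebra of those coefficients; it never invokes $|Q_i|=1$. It therefore applies unchanged and forces $\Omega$ to be a constant vector. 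Finally, the proof of Proposition \ref{Prop:S2-1} carries over word for word: constant $\Omega$ makes $A = R^{-1}\dot R$ constant, so $R = \exp(At)$ and $q_i(t) = \exp(At)Q_i$, a relative equilibrium. This proves Theorem \ref{thm:R3-0}.

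I expect the main obstacle to be expository rather than computational: making the reduction of the first paragraph airtight and confirming that no step inherited from the sphere secretly relied on the constraint. The delicate points are (i) that a rigid motion really is a rotation of a fixed configuration --- which needs the center-of-mass reduction and the Gram-matrix argument above, with some care for degenerate (collinear or coincident) configurations where $R(t)$ is not unique --- and (ii) checking that the zero-eigenvalue situation, which for $\mathbb{R}^3$ corresponds exactly to a collinear configuration, is covered by the analog of Proposition \ref{Prop:symmetric-1}. Once these are settled, the theorem follows from the $\mathbb{S}^2$ analysis with essentially no new work, which is precisely the sense in which the two problems are governed by the same $SO(3)$ reduction.
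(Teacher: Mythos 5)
Your proposal is correct and follows essentially the same route as the paper: pass to the rotating frame attached to the particles, observe that crossing the equations of motion with $Q_i$ yields exactly equation \eqref{equ:equ_S2-3} (the constraint term being absent, or annihilated, makes no difference), and then invoke the $\mathbb{S}^2$ case analysis of Sections \ref{sec:Euler}--\ref{sec:asymmetric} --- which indeed never uses $|Q_i|=1$, since the paper works with general constant $r_i$ --- to force $\Omega$ to be constant, whence Proposition \ref{Prop:S2-1} finishes. The only substantive difference is that you explicitly justify the reduction to the form $q_i(t)=R(t)Q_i$ via the center-of-mass and Gram-matrix argument (including the degenerate-configuration caveat), a step the paper's proof simply assumes.
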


\begin{proof}  
	 Obviously, the kinetic energy $K$ and the angular momentum $c$ are constant.
	 
	  Assume that  the rigid motion is of the form   $q_i(t)=R(t)Q_i$. Substituting it into \eqref{equ:L_R3-0} yields
\begin{equation} \label{equ: R3-1}
m_i \left(\dot\Omega\times Q_i +\Omega\times(\Omega\times Q_i)\right)
= \frac{\partial U}{\partial Q_i}, 
\end{equation}
where $\Omega= \widehat{R^T R'}$ and $\widehat{*}$ denotes  the hat-map from $\mathfrak{so}(3)$ to \(\mathbb{R}^3\) (cf. Appendix A.2). 
Taking the cross product with \(Q_i\) gives
\begin{equation} \label{equ: R3-2}	 
m_i Q_i\times (\dot{\Omega} \times Q_i)
	 +m_i \Omega \times \left(Q_i\times (\Omega\times Q_i)\right)
	 =Q_i \times \frac{\partial U}{\partial Q_i}. 
	 \end{equation}
	 
		Note that  the above equation \eqref{equ: R3-2} is the same as \eqref{equ:equ_S2-3},  so it leads to 
		Euler's equation \eqref{equ:Euler-1}.  Following the same reasoning as in previous sections, we conclude that the only solution satisfying \eqref{equ: R3-2} is \(\dot\Omega=0\), completing the proof.
\end{proof}

\textbf{Comparison with Pizzetti's method}

At first glance, Pizzetti's method appears similar to ours when setting \(\theta(t) \equiv 1\). However, there is a fundamental difference. Pizzetti's argument  shows that  if $R(t)\in SO(3)$ satisfies $R^T R''Q_i=  \frac{\partial U}{\partial Q_i}$, or equivalently  if
\begin{equation}\label{Pizzetti}
m_i \left(\dot\Omega\times Q_i +\Omega\times(\Omega\times Q_i)\right) =\frac{\partial U}{\partial Q_i}, 
\end{equation}
then \(\Omega\) must be constant.

In contrast, we start from the more general equation
\begin{equation}\label{equ:S2--1}
m_i \left(\dot\Omega\times Q_i +\Omega\times(\Omega\times Q_i)\right) = 2\lambda_i (t) Q_i + \frac{\partial U}{\partial Q_i},
\end{equation}
and derive the equation
\begin{equation}\label{ours}
m_i Q_i \times \left(\dot\Omega\times Q_i +\Omega\times(\Omega\times Q_i)\right) = Q_i \times \frac{\partial U}{\partial Q_i}.
\end{equation}
If $\Omega$ satisfies \eqref{ours}, we show that  $\Omega$ must be constant.   Setting $\lambda_1(t)=\dots =\lambda_N(t)\equiv 0$, our equation \eqref{equ:S2--1} reduces to \eqref{Pizzetti}.  As a result, our approach applies not only to rigid motions in \(\mathbb{R}^3\) but also to those on \(\mathrm{S}^2\), and it may have broader applicability.

While Pizzetti's equation \eqref{Pizzetti}  can be solved directly,  the equation \eqref{ours} requires a different treatment.  We attack it \eqref{ours}  by  integrating key ideas from rigid body dynamics. Specifically, we  utilize  fundamental principles such as 
the conservation of kinetic energy and angular momentum, the inertia tensor, and Euler's equations. To our knowledge, this is the first time that the claim that the axis of rotation for rigid motions on $\mathbb{S}^2$ has been tackled.

The above highlights the originality of our approach.\\

	 \textbf{Acknowledgements}
We sincerely thank the reviewers for their valuable comments and suggestions, especially for pointing out Pizzetti's work, which we were previously unaware of and which has significantly contributed to improving this manuscript.

\appendix

\section{Euler angles}\label{appd:EulerAngles}

Let \( q \) denote the Cartesian radius vector of a point relative to the inertial coordinate system, and let \( Q \) be the Cartesian radius vector of the same point relative to a rotating coordinate system. The origins of both coordinate systems are chosen to coincide with the center of \( \mathbb{S}^2  \). Clearly, the two vectors \( q \) and \( Q \) are related by the following equation:
\[
q = R(t) Q,
\]
where \( R(t) \) is the rotation matrix that transforms coordinates from the rotating frame to the inertial frame. This rotation matrix \( R \) can be decomposed into a product of three simpler rotations. Specifically, let
\[
\begin{split}
	D &= \begin{pmatrix}
		\cos \varphi & -\sin \varphi & 0 \\
		\sin \varphi & \cos \varphi & 0 \\
		0 & 0 & 1
	\end{pmatrix}, \\
	C &= \begin{pmatrix}
		1 & 0 & 0 \\
		0 & \cos \theta & -\sin \theta \\
		0 & \sin \theta & \cos \theta
	\end{pmatrix}, \\
	B &= \begin{pmatrix}
		\cos \psi & -\sin \psi & 0 \\
		\sin \psi & \cos \psi & 0 \\
		0 & 0 & 1
	\end{pmatrix}.
\end{split}
\]
Then \( R \) can be expressed as \( R = DCB \). The angles \(\varphi\), \(\theta\), and \(\psi\) are known as the \emph{Euler angles}. For a detailed discussion on the geometric meaning of the Euler angles, refer to 
\cite{Goldstein1980,Landau1960}.

Chose the inertia frame such that  $c=|c|(0,0,1)^T$.
Then equations \eqref{equ:cC} and \eqref{equ:OmegaC} yield 
\beq \label{equ:eulerangle-Omega-0}
I\Omega= R^T |c|(0,0,1)^T= |c| B^T C^T  D^T (0,0,1)^T. 
\eeq 
Choosing the axes \( x \), \( y \), and \( z \) of the rotating frame to align with the principal axes of the inertia tensor \( I \), equation \eqref{equ:eulerangle-Omega-0} simplifies to
\beq \label{equ:eulerangle-Omega-1}
	I_x \Omega_x = |c| \sin\theta \sin\psi, \ \ I_y \Omega_y = |c| \sin\theta \cos\psi, \  \ I_z\Omega_z = |c| \cos\theta. 
\eeq

\subsection{Relations between $\Omega$ and $\dot{\varphi}, \dot{\theta}, \dot\psi$}
Recall that  the Lie algebra of  $SO(3)$ 
is 
\[  \sok(3)= \{ A \in R^{3\times 3}, A^T=-A \}. \]
That is, $\sok(3)$ consists of all skew-symmetric $3 \times 3$ matrices. 
The Lie algebra $\sok(3)$ (with commutator) is isomorphic to the Lie algebra $\mathbb{R}^3$ (with cross product) by the following hat-map $\sok(3) \to \mathbb{R}^3 $, 
\[  
\widehat{A}=(x,y,z)^T, \ \mbox{for} \  A=\begin{bmatrix}
0&-z&y\\
z&0&-x\\
-y&x&0
\end{bmatrix}\in \sok(3).  
\]
The hat-map has the following identity, 
\begin{equation}\notag
	A u= \widehat{A} \times u, \ \mbox{for}\ \forall u \in \mathbb{R}^3 . 
\end{equation} 

Given any $R\in SO(3), A\in \sok{3}$, note that $RAR^{-1} \in \sok(3)$.   It holds that  $\widehat{RAR^{-1} }=R\hat{A} $.  Because for any $u\in \mathbb{R}^3 $, we have 
\[ (R\hat{A}) \times (Ru)= R ( \hat{A} \times u)= RAu =RAR^{-1} Ru = \widehat{RAR^{-1} } \times (Ru),  \] 
	where we have employed  the formula
\[  \forall M \in GL(3, R), a, b\in \mathbb{R}^3 , (Ma)\times (Mb)= (\det M)  \left(  (M)^{-1}\right)^T (a\times b). \]

Note that $\Omega = \widehat{R^{-1} \dot{R}}$, and 
\begin{align*}
R^{-1}  \dot{R} &= (DCB)^{-1} \dot{D}(CB)+ (DCB)^{-1} D\dot{C}B+(DCB)^{-1} DC \dot{B}\\
                                &= (CB)^{-1} D^{-1}\dot{D}(CB)+ B^{-1} C^{-1}\dot{C}B+ B^{-1}\dot{B}. 
\end{align*}
Then 
\[  \Omega =  (CB)^T  \widehat{D^{-1}\dot{D}} + B^T\widehat{C^{-1}\dot{C}} + \widehat{B^{-1}\dot{B}}.     \]
\begin{align*}
	&D^{-1}\dot{D}= \dot\varphi  \left(\begin{array}{ccc}
		0 & -1 & 0 \\
		1 & 0 & 0 \\
		0 & 0 & 0
	\end{array}\right), ~ & \widehat{D^{-1}\dot{D}}= \dot\varphi \left(\begin{array}{c}0 \\0 \\ 1\end{array}\right);\\
	&C^{-1}\dot{C}= \dot\theta \left(\begin{array}{ccc}
		0 & 0 & 0 \\
		0 & 0 & -1 \\
		0 & 1 & 0
	\end{array}\right), ~& \widehat{C^{-1}\dot{C}}= \dot\theta \left(\begin{array}{c}1\\0 \\0 \end{array}\right);\\
		&B^{-1}\dot{B}= \dot{\psi} \left(\begin{array}{ccc}
		0 & -1 & 0 \\
		1 & 0 & 0 \\
		0 & 0 & 0
	\end{array}\right), ~ & \widehat{B^{-1}\dot{B}}= \dot{\psi} \left(\begin{array}{c}0 \\0 \\ 1\end{array}\right).
\end{align*}
Hence, 
\begin{equation}\label{equ:Euler-Kinematics} 
	\begin{split}
		\Omega_x &=\dot\varphi\sin\theta\sin\psi+\dot\theta\cos\psi, \\
		\Omega_y &=	\dot\varphi\sin\theta\cos\psi-\dot\theta\sin\psi, \\
		\Omega_z &=\dot\varphi\cos\theta+\dot\psi. 
	\end{split}
\end{equation}

\section{Explicit solution to Euler's equations}\label{appd:explicit solutions}
We compute the solutions to Euler's equations \eqref{equ:Euler-1} under the assumption
\[  0< I_x<I_y<I_z, \quad 2KI_x<|C|^2<2KI_z. \]

 The solution $\Omega(t)$ of Euler's  equations
moves on a curve given by \eqref{equ:Ox Oz Oy}.
Some curves (contours) are shown in the Figure \ref{ellipsoid}.
Since the curves are symmetric about the origin,
it is enough to solve the positive branch 
of the square root of \eqref{equ:Ox Oz Oy}.
Namely, 
\beq
\begin{split}\label{eqForOmegaY}
\frac{d\Omega_y}{dt}
=&\frac{I_z-I_x}{I_y}\Omega_z\Omega_x\\
=&\frac{1}{I_y\sqrt{I_zI_x}}\sqrt{(2KI_z-|C|^2)(|C|^2-2KI_x)}\\
	&\sqrt{
		\left(1-\frac{I_y(I_z-I_y)}{2KI_z-|C|^2}\Omega_y^2\right)
		\left(1-\frac{I_y(I_y-I_x)}{|C|^2-2KI_x}\Omega_y^2\right)
	}.
\end{split}
\eeq
So, $\Omega_y$ 
 is 
expressed by Jacobi's elliptic function, in general.

To make the above equation  to the standard form,
let
\beq\label{changevariable}
\begin{split}
s&=\Omega_y \sqrt{\frac{I_y(I_z-I_y)}{2KI_z-|C|^2}},\\
d\tau&=dt \sqrt{\frac{(I_z-I_y)(|C|^2-2KI_x)}{I_xI_yI_z}}.
\end{split}
\eeq
Then, the equation is
\beq \notag
\frac{ds}{d\tau}=\sqrt{(1-s^2)(1-k^2 s^2)}.
\eeq

\textbf{Case I: } For $k^2 \ne 1$, i.e.,   $|C|^2\ne 2KI_y$.

Take $\tau=0$ for $s=0$. Then $s=\sn(\tau, k)$, the Jacobi elliptic sine function with modulus $k$.  Hence, 
\beq 
\begin{split}\label{solForModulusKne1}
\Omega_x&=\cn(\tau, k)\sqrt{\frac{2KI_z-|C|^2}{I_x(I_z-I_x)}},\\
\Omega_y&=\sn(\tau,k)\sqrt{\frac{2KI_z-|C|^2}{I_y(I_z-I_y)}},\\
\Omega_z&=\dn(\tau,k)\sqrt{\frac{|C|^2-2KI_x}{I_z(I_z-I_x)}}.
\end{split}
\eeq

\textbf{Case II: } For $k^2 =1$, i.e.,   $|C|^2=2KI_y$.

	For this case, the equation \eqref{eqForOmegaY} reduces to
	\[  \frac{d\Omega_y}{dt}
	=\frac{2K}{I_y\sqrt{I_zI_x}}\sqrt{(I_z-I_y)(I_y-I_x)}\\
	\left( 1-\frac{I_y\Omega_y^2}{2K}\right). 
	\]
	So $\Omega_y$ is a constant if  $2K=I_y\Omega_y^2$.

The substitutions \eqref{changevariable} reduce to 
\[   s=\sqrt{\frac{I_y}{2K}} \Omega_y,\, \, \, 
d\tau=dt \sqrt{\frac{2K(I_z-I_y)(I_y-I_x)}{I_xI_yI_z}}.  \]
and then 
\beq \notag 
\frac{ds}{d\tau}=1-s^2.
\eeq

Take $\tau=0$ for $s=0$. Then $s=\tanh(\tau)$. 
Hence, 
\beq
\begin{split}\label{OmegaForKeq1}
\Omega_x&=\sqrt{\frac{2K(I_z-I_y)}{I_x(I_z-I_x)}}\,\frac{1}{\cosh(\tau)},\\
\Omega_y&=\sqrt{\frac{2K}{I_y}}\, \tanh(\tau),\\
\Omega_z&=\sqrt{\frac{2K(I_y-I_x)}{I_z(I_z-I_x)}}\,\frac{1}{\cosh(\tau)}.
\end{split}
\eeq


\end{document}